\documentclass{amsart}
\usepackage{amssymb,amsfonts,amsmath}
\usepackage[all]{xy}
\usepackage[shortlabels]{enumitem}
\pdfoutput=1
\usepackage{commath}
\usepackage{esdiff}	
\usepackage{graphicx,subfigure}	
\usepackage[center]{caption}
\usepackage[sectionbib]{natbib}
\usepackage[colorlinks,
            linkcolor=blue,      
            anchorcolor=blue, 
            citecolor=blue,]{hyperref}

\theoremstyle{theorem}
\newtheorem{theorem}{Theorem}[section]
\newtheorem{lemma}[theorem]{Lemma}
\newtheorem{proposition}[theorem]{Proposition}
\newtheorem{corollary}[theorem]{Corollary}

\theoremstyle{definition}
\newtheorem{definition}[theorem]{Definition}
\newtheorem{example}[theorem]{Example}
\newtheorem{remark}[theorem]{Remark}

\numberwithin{equation}{section}
\numberwithin{figure}{section}

\begin{document}

\title{Non-positively curved Ricci Surfaces with catenoidal ends}

\author{Yiming ZANG}

\address{Université de Lorraine, CNRS, IECL, F-54000 Nancy, France}

\email{yiming.zang@univ-lorraine.fr}

\maketitle

\begin{abstract}

A Ricci surface is defined as a Riemannian surface $(M,g_M)$ whose Gauss curvature satisfies the differential equation $K\Delta K + g_M\big(dK,dK\big) + 4K^3=0$. Andrei Moroianu and Sergiu Moroianu proved that a Ricci surface with non-positive Gauss curvature admits locally a minimal immersion into $\mathbb{R}^3$. In this paper, we are interested in studying non-compact orientable Ricci surfaces with catenoidal ends. We use an analogue of the Weierstrass data to obtain some classification results for such Ricci surfaces. We also give an existence result for positive genus Ricci surfaces with catenoidal ends. 

\end{abstract}

\footnote{\text{MSC2021:} 53A10, 53C20.}   
\footnote{\text{Keywords:} Ricci surfaces; minimal surfaces; catenoidal ends; Weierstrass data.}

\section{Introduction}

A classical question in the theory of minimal surfaces is to study when a Riemannian surface $(M,g_M)$ can be locally isometrically immersed into $\mathbb{R}^3$ as a minimal surface. In the history, the first result for this question was given by Gregorio Ricci-Curbastro. He provided a necessary and sufficient condition for the existence of such immersions near points with negative Gauss curvature (see \citep{ricci1895sulla}). Andrei Moroianu and Sergiu Moroianu have proven later in \citep{moroianu2015ricci} that the assumption of negative Gauss curvature could actually be left out. The main idea of their proof is to study the differential equation
\begin{align*}
        K\Delta K + g_M\big(dK,dK\big) + 4K^3=0
        \end{align*}
satisfied by the Gauss curvature appearing in Ricci's theorem. It is often called the Ricci condition in the case $K<0$. A Riemannian surface whose metric satisfies this equation is called a Ricci surface. 

The theory of compact Ricci surfaces is developed by Andrei Moroianu and Sergiu Moroianu as well. To be more precise, they provided some methods in constructing compact Ricci surfaces. We are therefore interested in considering non-compact orientable Ricci surfaces. In 1958, Huber proved that a complete non-positively curved Riemannian surfaces with finite total curvature has to be biholomorphic to a compact Riemann surface with finite number of punctured points (see \citep{huber1958subharmonic}). Consequently, a problem which should be naturally posed is to determine all the possible Ricci metrics on a given Riemann surface. However, this project seems to be very ambitious because many cases may occur around punctured points. The goal of our work is to classify Ricci surfaces by adding an assumption on the punctured points which is called the condition for catenoidal ends. 

In this paper, Section 2 is devoted to the recall of some basic properties of the well-known Weierstrass representation for minimal immersions in $\mathbb{R}^3$. The definition of Ricci surfaces is introduced in Section 3, then we focus on defining the Weierstrass data for Ricci surfaces which will be our principal tools. In Section 4, we plan to define catenoidal ends for Ricci surfaces, and we will prove a lemma providing a local description of the Weierstrass data near a catenoidal end. 

Our first main result is theorem \ref{thm4.1} which will be proven in Section 5. This theorem offers a complete classification of non-positively curved Ricci surfaces $M\simeq\mathbb{C}\setminus\lbrace0\rbrace$ with two catenoidal ends. It assures that a such Ricci surface can only be isometric to one of the two classes of surfaces. We will give also an explanation for the relations between our theorem and some related results. For example, theorem \ref{thm4.1} is equivalent to a result of Troyanov which classified all the metrics of constant Gauss curvature 1 on $\bar{\mathbb{C}}$ with two conical singularities at $0$ and $\infty$ (see \citep{troyanov1989metrics}).

In Section 6, we are going to discuss the case when the Ricci surface is biholomorphic to $M\simeq\mathbb{C}\setminus\lbrace0,1\rbrace$ possessing three catenoidal ends. Since things become much more complicated in this situation, it could be very hard to obtain a complete classification as in the previous case. We restrict ourselves to several classification results under some additional conditions on the total curvature and the reducibility of induced metric $(-K)g_M$. For concrete statements of these results, readers may consult theorem \ref{thm6.1}, theorem \ref{thm6.2} and theorem \ref{thm6.3}.
 
In the last section of this paper, we extend our discussion to Ricci surfaces with positive genus. We have proven theorem \ref{thm7.2} which assures that for $k,n>0$, there exists a Ricci surface with genus $k$ and $n$ catenoidal ends. 

The author wishes to express his gratitude to his supervisor, Prof. Benoît Daniel, for his helpful discussions and encouragement.

\section{Preliminaries}

In this section, we plan to recall the Weierstrass representation for minimal immersions in $\mathbb{R}^3$ which plays an essential role in our discussion.

Suppose that $X:M\rightarrow\mathbb{R}^3$ is a minimal immersion, where $M$ is a smooth manifold of dimension 2. Taking a simply connected neighbourhood $(U,(x,y))$ of $p_0\in M$ with $(x,y)$ the isothermal coordinates, then $X$ being minimal means that $X$ is a harmonic map, or equivalently,
\begin{equation} \label{1.1}
\phi=(\phi_1,\phi_2,\phi_3):=\frac{\partial X}{\partial x}-i\frac{\partial X}{\partial y}=2\frac{\partial X}{\partial z}
\end{equation}
is holomorphic, where $\frac{\partial}{\partial z}=\frac{1}{2}(\frac{\partial}{\partial x}-i\frac{\partial}{\partial y})$ and $\frac{\partial}{\partial \bar{z}}=\frac{1}{2}(\frac{\partial}{\partial x}+i\frac{\partial}{\partial y})$. 
It is not difficult to verify that $\phi dz$ is a globally defined vector valued holomorphic 1-form on $M$. With the help of $\phi dz$, we may construct a meromorphic function $g$ and a holomorphic 1-form $\eta$ on $M$ such that the minimal immersion $X$ can be expressed as 
\begin{equation}\label{1.2}
X(p)=X(p_0)+\textrm{Re}\int_{p_0}^{p}\left(\frac{1}{2}\left(1-g^2\right)\eta,\frac{i}{2}\left(1+g^2\right)\eta,g\eta\right).
\end{equation}
Formula (\ref{1.2}) is called the Weierstrass representation of the minimal immersion $X:M\rightarrow\mathbb{R}^3$ and the pair $(g,\eta)$ is called the Weierstrass data of $X$. Since $\phi dz$ is a holomorphic 1-form which does not vanish, we see that whenever $g$ has a pole of order $m$ at $p\in M$, $\eta$ must have a zero of order $2m$ at $p\in M$. The function $g$ is also called the Gauss map. To see this, it is sufficient to realize that $g$ is just the classical Gauss map $G:M\rightarrow \mathbb{S}^2$ composed by the stereographic projection from the north pole of $\mathbb{S}^2$. Moreover, it is worth noticing that all these formulas still hold if we multiply $\eta$ by an element $e^{i\theta}\in \mathbb{S}^{1}$, thus the new Weierstrass data $(g,e^{i\theta}\eta)$ give another minimal immersion $X_{\theta}:M\rightarrow\mathbb{R}^3$. This leads to the following definition:

\begin{definition}\label{def1.1}
The immersions $X_{\theta}:M\rightarrow\mathbb{R}^3$ with $\theta\in \mathbb{R}/2\pi\mathbb{Z}$ are called associate minimal immersions of $X:M\rightarrow\mathbb{R}^3$.
\end{definition}

We will discuss some geometric data of a minimal immersion with the help of the Weierstrass representation.

Firstly, if we denote locally $\eta$ by $\alpha dz$, then the induced Riemannian metric $g_M$ on $M$ will be determined to be
\begin{align}\label{1.3}
g_M=\frac{1}{4}|\alpha|^2\big(1+|g|^2\big)^2|dz|^2.
\end{align}

The second thing is to determine the Gauss curvature $K$ of $M$. In use of the Riemannian metric $g_M$, the Gauss curvature can be computed as
\begin{align}\label{1.4}
K=-\left[\frac{4|g'|}{|\alpha|(1+|g|^2)^2}\right]^2.
\end{align}

We list in the end some examples of minimal immersions in $\mathbb{R}^3$ with their Weierstrass data:
\begin{example}\label{ex1.1}
(Enneper's Surface) Let $M=\mathbb{C}$, the Weierstrass data are given by $g(z)=z$ and $\eta=dz$.
\end{example}
\begin{example}\label{ex1.2}
(Catenoid) Let $M=\mathbb{C}\backslash\{0\}$, the Weierstrass data are given by $g(z)=z$ and $\eta=\frac{a}{z^2}dz$, with $a\in\mathbb{R}\backslash\{0\}$.
\end{example}

 \section{Ricci metrics and Ricci surfaces}

Andrei Moroianu and Sergiu Moroianu defined in their article \cite{moroianu2015ricci} a class of surfaces called Ricci surfaces which can be regarded as a generalisation of minimal surfaces in $\mathbb{R}^3$. In this section, we are going to introduce some basic properties of Ricci surfaces. Throughout this paper, we only consider orientable Ricci surfaces.

The following theorem is an important fact which has been proven by Gregorio Ricci-Curbastro \cite{ricci1895sulla}:
\begin{theorem}\label{thm 2.1}
A Riemannian surface $(M, g_M)$ with negative Gauss curvature $K<0$ has local isometric immersions as minimal surface in $\mathbb{R}^3$ if and only if one of the three equivalent conditions holds:\\
i). The metric $\sqrt{-K}g_M$ is a flat metric;\\
ii). The metric $(-K)g_M$ is a metric of constant Gauss curvature $1$;\\
iii). The Gauss curvature satisfies 
       \begin{align*}
        K\Delta K + g_M\big(dK,dK\big) + 4K^3=0. 
        \end{align*}
\end{theorem}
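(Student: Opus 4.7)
The plan is to split the proof into two parts: first, establish the equivalences of the three conditions (i), (ii), (iii) by direct computation, and then show that they are equivalent to the local existence of a minimal isometric immersion into $\mathbb{R}^3$.

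For the equivalences among (i)--(iii), I would use the standard conformal-change formula for the Gauss curvature: if $\tilde{g}_M = e^{2\varphi} g_M$, then $\tilde{K} = e^{-2\varphi}(K + \Delta_{g_M}\varphi)$ in the sign convention of the paper. Applying this with $\varphi = \frac{1}{4}\log(-K)$ for (i) and with $\varphi = \frac{1}{2}\log(-K)$ for (ii), a direct computation shows that both conditions reduce to the same scalar equation $\Delta_{g_M}\log(-K) = -4K$. Expanding this using $d\log(-K) = dK/K$ and the Leibniz rule for the divergence of $K^{-1}\mathrm{grad}\,K$ yields precisely the Ricci PDE in (iii). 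Thus (i) $\Leftrightarrow$ (ii) $\Leftrightarrow$ (iii).

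For the equivalence with the existence of a local minimal immersion, necessity of (ii) is a classical consequence of the Weierstrass representation: for any minimal immersion $X:M\hookrightarrow\mathbb{R}^3$ with $K<0$, the Gauss map $G:M\to\mathbb{S}^2$ is conformal and satisfies $G^{\ast}g_{\mathrm{round}} = (-K)g_M$, so $(-K)g_M$ inherits the constant curvature $1$. The harder direction is to produce a local minimal immersion from (ii). Working in local isothermal coordinates $z$ with $g_M = \lambda^2|dz|^2$, condition (ii) yields a local isometry from $(U,(-K)g_M)$ onto an open subset of $(\mathbb{S}^2, g_{\mathrm{round}})$; composing with stereographic projection produces an orientation-preserving conformal map $g:U\to\mathbb{C}$, which is therefore holomorphic and satisfies $\frac{4|g'|^2}{(1+|g|^2)^2} = (-K)\lambda^2$. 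This function $g$ will play the role of the Gauss map in the Weierstrass representation.

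It then remains to construct the companion holomorphic $1$-form $\eta = \alpha\,dz$, and this is where the main obstacle lies. Formula (\ref{1.3}) forces $|\alpha| = \frac{2\lambda}{1+|g|^2}$; on a simply-connected $U$ it suffices to verify that $\log|\alpha|$ is harmonic, because then $\alpha$ may be recovered as the exponential of a holomorphic function with prescribed real part. Writing
\[
\log|\alpha| = \log|g'| - \tfrac{1}{2}\log(-K) - 2\log(1+|g|^2) + \mathrm{const},
\]
the first term is harmonic where $g'\neq 0$, and a short calculation shows that the contributions of the remaining two terms to $\partial_z\partial_{\bar z}\log|\alpha|$ cancel, precisely because of the Liouville-type equation $\Delta_{g_M}\log(-K) = -4K$ supplied by (ii) together with the identity $(-K)\lambda^2 = \frac{4|g'|^2}{(1+|g|^2)^2}$. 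Once $\alpha$ has been chosen holomorphic, plugging $(g,\alpha\,dz)$ into the Weierstrass representation (\ref{1.2}) produces a minimal immersion on $U$, and formula (\ref{1.3}) confirms that the induced metric is $\lambda^2|dz|^2 = g_M$, completing the proof.
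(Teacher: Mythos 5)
Your proposal is mathematically sound, but note that the paper itself gives no proof of this theorem: it is quoted as a classical result of Ricci-Curbastro, so there is no internal argument to compare against line by line. Your two-step structure is the standard modern route and is essentially the one underlying the paper's toolkit. The equivalences (i) $\Leftrightarrow$ (ii) $\Leftrightarrow$ (iii) via the conformal-change formula are correct: with the paper's (positive) Laplacian both conformal factors $\tfrac14\log(-K)$ and $\tfrac12\log(-K)$ reduce the curvature conditions to $\Delta\log(-K)=-4K$, and expanding $\Delta\log(-K)=\tfrac{\Delta K}{K}+\tfrac{|dK|^2}{K^2}$ and multiplying by $K^2$ gives exactly (iii). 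The necessity of (ii) via $G^{*}g_{\mathrm{round}}=III=(-K)g_M$ for $H=0$ is correct, and the sufficiency argument works: local isometry of $(-K)g_M$ with the round sphere produces the holomorphic $g$ with $\frac{4|g'|^2}{(1+|g|^2)^2}=(-K)\lambda^2$ (after fixing orientation and rotating the sphere to avoid the pole of stereographic projection, which you should state but which is harmless), and your harmonicity computation for $\log|\alpha|$ does close: $\Delta_0\log|g'|=0$, $\Delta_0\log(1+|g|^2)=\frac{4|g'|^2}{(1+|g|^2)^2}=(-K)\lambda^2$, and the Liouville equation gives $\tfrac12\Delta_0\log(-K)=2K\lambda^2$, so the terms cancel and $\alpha$ can be taken as $e^{F}$ with $F$ holomorphic, $\mathrm{Re}\,F=\log|\alpha|$, on simply connected $U$. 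Where your route differs from the paper's machinery is only in how $\eta$ is produced: the paper's Proposition \ref{prop2.1} encodes the same step through the factorization $e^{-f}=|a|^2+|b|^2$ with $a^2=\alpha/2$, $b=ag$, while you recover $\alpha$ directly from the harmonicity of $\log|\alpha|$; the two devices are equivalent, yours being slightly more computational and the paper's more readily globalizable (it is what feeds into the Weierstrass-data discussion of Section 3 via (\ref{2.10})). No genuine gap.
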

The third condition in theorem \ref{thm 2.1} can even be studied without the hypothesis $K<0$. This observation inspires Andrei Moroianu and Sergiu Moroianu to give the following definition.
\begin{definition}\label{def2.1}
A Riemannian surface $(M,g_M)$ whose Gauss curvature $K$ satisfies the following identity
\begin{align}\label{2.1}
K\Delta K+g_M\big(dK,dK\big)+4K^3=0
\end{align}
is called a Ricci surface, the metric $g_M$ is called a Ricci metric.
\end{definition}

From theorem \ref{thm 2.1}, we can see that all the immersed minimal surfaces in $\mathbb{R}^3$ are Ricci surfaces. Moreover, Andrei Moroianu and Sergiu Moroianu proved in their article the following result which shows that a Ricci surface with non-positive Gauss curvature can be locally realized as an immersed minimal surface in $\mathbb{R}^3$.

\begin{theorem}\label{thm2.2}
Let $(M,g_M)$ be a connected Ricci surface, then its Gauss curvature $K$ does not change sign on $M$. If $K\leq0$, then $M$ admits locally a minimal isometric immersion into $\mathbb{R}^3$.
\end{theorem}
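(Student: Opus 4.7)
The plan is to attack the two assertions separately.

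\textbf{First assertion ($K$ has constant sign).} I will establish the stronger dichotomy: either $K\equiv 0$ on $M$ or $K$ never vanishes. Suppose $K(p_0)=0$. Evaluating the Ricci equation \eqref{2.1} at $p_0$ gives $|dK|^2(p_0)=0$, hence $dK(p_0)=0$. Pick normal isothermal coordinates centred at $p_0$, so $g_M=e^{2u}|dz|^2$ with $u(0)=0$ and $du(0)=0$, and Taylor-expand
\begin{equation*}
K(z)=P_n(z)+P_{n+1}(z)+\cdots,\qquad n\geq 2,
\end{equation*}
where $P_n$ is the first nonzero homogeneous piece. Since $u=O(|z|^2)$, one has $\Delta_g=(1+O(|z|^2))\Delta_0$ and $|\cdot|_g^{\,2}=(1+O(|z|^2))|\cdot|_0^{\,2}$, so the conformal corrections to $K\Delta_g K$ and $|dK|_g^{\,2}$ enter strictly above order $2n-2$; the cubic term $4K^3$ enters only at order $3n>2n-2$. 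The leading-order balance of \eqref{2.1} therefore reduces to
\begin{equation*}
P_n\,\Delta_0 P_n+|\nabla_0 P_n|^2=\tfrac{1}{2}\Delta_0(P_n^{\,2})=0.
\end{equation*}
Hence $P_n^{\,2}$ is a nonnegative homogeneous harmonic polynomial on $\R^2$ of positive degree $2n$ vanishing at the origin; the mean-value property on small circles around $0$ forces $P_n^{\,2}\equiv 0$, whence $P_n\equiv 0$, contradicting the choice of $n$. Thus every Taylor coefficient of $K$ at $p_0$ vanishes. Combined with real-analyticity of the Ricci metric (which follows, where $K\neq 0$, from interior analytic regularity for the quasi-linear elliptic PDE satisfied by $u$, and is extended across the zero set of $K$ by a standard unique-continuation argument), this forces $K\equiv 0$ in a neighborhood of $p_0$. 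Therefore $\{K=0\}$ is both open and closed in $M$, and connectedness yields the dichotomy.

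\textbf{Second assertion (local minimal immersion when $K\leq 0$).} The dichotomy just proved reduces the statement to two sub-cases. If $K\equiv 0$, then $(M,g_M)$ is flat, hence locally isometric to an open subset of $\R^2$, which embeds as an affine plane in $\R^3$---a trivial minimal isometric immersion. If $K<0$ everywhere on $M$, then condition (iii) of Theorem \ref{thm 2.1} is exactly satisfied, and Ricci--Curbastro's classical theorem directly produces local minimal isometric immersions of any neighborhood of $M$ into $\R^3$.

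The main obstacle is the first assertion. Two delicate points must be justified rigorously: (i) that the conformal and Christoffel corrections to $\Delta_g$ and $|\cdot|_g$ in normal isothermal coordinates are of strictly higher order than $2n-2$, so that the leading-order equation genuinely reduces to the flat identity $\Delta_0(P_n^{\,2})=0$; and (ii) that the vanishing of every Taylor coefficient of $K$ at $p_0$ can be upgraded to the identical vanishing of $K$ on a neighborhood of $p_0$, which requires an analyticity or unique-continuation input for the Ricci equation (whose ellipticity in $u$ degenerates precisely at the zero set of $K$). Once the dichotomy is secured, the second assertion is an immediate appeal to Theorem \ref{thm 2.1}.
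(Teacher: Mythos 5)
Your proof of the first assertion fails, and the failure is not repairable along the same lines: the dichotomy you set out to prove (on a connected Ricci surface either $K\equiv 0$ or $K$ never vanishes) is simply false. The higher-order Enneper surface with Weierstrass data $g=z^{2}$, $\eta=dz$ on $\mathbb{C}$ is a connected minimal surface, hence a Ricci surface, and by (\ref{1.4}) its curvature $K=-64|z|^{2}/(1+|z|^{4})^{4}$ vanishes exactly at $z=0$ and is negative elsewhere; the paper itself relies on this phenomenon when it says that $(-K)g_M$ has isolated conical singularities at the zeroes of $K$ (Section 3) and that the zeroes $a_1,a_2$ of $|Q|$ are zeroes of $K$ (remark \ref{rem6.1}). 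The concrete error is a sign convention hidden in (\ref{2.1}): that equation is satisfied by minimal surfaces only when $\Delta$ is the geometers' non-negative Laplacian, $\Delta=-e^{-2u}(\partial_x^{2}+\partial_y^{2})$ in isothermal coordinates (with the analysts' sign the identity they satisfy is $K\Delta K-|dK|^{2}-4K^{3}=0$, as a direct check on the catenoid or Enneper shows). Your identity $P_n\Delta_0P_n+|\nabla_0P_n|^{2}=\tfrac12\Delta_0(P_n^{2})$ uses the analysts' sign throughout, so the leading-order balance you derive is the wrong one; with the correct reading of (\ref{2.1}) it becomes $P_n\,D P_n-|\nabla_0P_n|^{2}=0$, $D=\partial_x^{2}+\partial_y^{2}$, which has plenty of nontrivial non-positive homogeneous solutions, e.g.\ $P_n=-c\,(x^{2}+y^{2})^{n/2}$ -- precisely the leading jet of $K$ for the order-two Enneper surface. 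Hence no contradiction arises, infinite-order vanishing cannot be concluded, and the open--closed argument collapses. (Even granting the Taylor computation, the upgrade from an infinite-order zero to $K\equiv 0$ near $p_0$ via ``analyticity plus a standard unique-continuation argument'' is itself an unproved, nontrivial claim, since ellipticity degenerates exactly on $\{K=0\}$.)

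Because the dichotomy is false, the second half also misses the real content of the theorem. In the genuine situation $K\le 0$ with isolated zeroes, Ricci's theorem \ref{thm 2.1} (which requires $K<0$) produces immersions only on $\{K<0\}$; the substance of theorem \ref{thm2.2} is to show that $K$ cannot change sign and that a local minimal isometric immersion exists \emph{around the zeroes of} $K$ as well, e.g.\ by controlling the vanishing order of $K$ and producing the data of proposition \ref{prop2.1} across such points. Your argument never addresses this case. Note also that the present paper does not prove this statement at all -- it is quoted from Moroianu--Moroianu \cite{moroianu2015ricci} -- so there is no internal proof to compare with; but as it stands your proposal establishes neither the constancy of the sign of $K$ nor the local immersion near zeroes of $K$.
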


These two theorems indicate that the relation between Ricci surfaces with non-positive Gauss curvature and minimal immersions in $\mathbb{R}^3$ is very close. Therefore, we are inspired to use the tools in the theory of minimal immersions in $\mathbb{R}^3$ to study Ricci surfaces. This is possible thanks to the next proposition.
\begin{proposition}\label{prop2.1}
Let $g_{0}=|dz|^2$ be a flat metric on some domain $\Omega\subset\mathbb{C}$ and $f:\Omega\rightarrow\mathbb{R}$ a smooth function. The metric $g_{\Omega}=e^{-2f}g_0$ admits locally a minimal immersion in $\mathbb{R}^3$ if and only if near every $x\in\Omega$ there exists a pair of holomorphic functions $(a,b)$ such that $e^{-f}=|a|^2+|b|^2$.
\end{proposition}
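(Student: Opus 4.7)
The plan is to prove both implications by passing between the conformal factor of $g_\Omega$ and the Weierstrass data $(g,\eta)$ of a local minimal immersion, with formula (\ref{1.3}) acting as the dictionary between the two.

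For the forward implication, I would assume $g_\Omega=e^{-2f}|dz|^2$ is the induced metric of a minimal immersion on a neighbourhood of $x\in\Omega$. Writing the Weierstrass 1-form locally as $\eta=\alpha\,dz$, (\ref{1.3}) becomes
$$e^{-f}=\tfrac{1}{2}|\alpha|\bigl(1+|g|^2\bigr).$$
The first substantive step is to observe that every zero of $\alpha$ has even order: since $\phi\,dz$ must be a nowhere-vanishing holomorphic 1-form, a zero of $\alpha$ of order $k$ at a point must be compensated by a pole of $g$ of order $m$ with $k=2m$ (the $g^2\alpha$ contributions in $\phi_1,\phi_2$ have order $k-2m$, which must be zero for the form to neither blow up nor vanish). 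On a simply connected sub-neighbourhood one can therefore choose a holomorphic $h$ with $h^2=\alpha/2$; setting $a:=h$ and $b:=hg$ yields two holomorphic functions (the poles of $g$ are cancelled by the zeros of $h$) with $|a|^2+|b|^2=|h|^2(1+|g|^2)=e^{-f}$, as required.

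For the reverse implication, given holomorphic $a,b$ with $e^{-f}=|a|^2+|b|^2$ near $x$, I would first note that $e^{-f}>0$ forces $a$ and $b$ to have no common zero, so $g:=b/a$ is meromorphic and $\eta:=2a^2\,dz$ is a holomorphic 1-form whose order of vanishing at each pole of $g$ is exactly twice the order of the pole. A direct computation shows that the associated $\phi\,dz$ of (\ref{1.1}) is holomorphic, nowhere-vanishing, and satisfies the isotropy identity $\phi_1^2+\phi_2^2+\phi_3^2=0$, which is exactly what is needed for (\ref{1.2}) to define a conformal harmonic, hence minimal, immersion. Restricting to a simply connected sub-neighbourhood removes any period obstruction in the integral (\ref{1.2}), and substituting back into (\ref{1.3}) recovers $g_\Omega$ since $\tfrac{1}{4}|2a^2|^2(1+|b/a|^2)^2=(|a|^2+|b|^2)^2=e^{-2f}$.

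The main obstacle will be the parity control on the orders of zeros of $\alpha$ in the forward direction: without the nowhere-vanishing property of $\phi\,dz$, one could not locally extract the holomorphic square root $h$ that is needed to produce $a$ and $b$. The remaining steps are routine manipulations with the Weierstrass data, once one works on simply connected neighbourhoods so that no period integrals appear.
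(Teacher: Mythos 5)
Your proposal is correct and follows essentially the same route as the paper: in the forward direction you use the even order of the zeros of $\alpha$ (forced by the nowhere-vanishing of $\phi\,dz$) to extract a holomorphic square root $a$ of $\alpha/2$ and set $b=ag$, and in the reverse direction you take $g=b/a$, $\eta=2a^2\,dz$ and invoke the Weierstrass representation (\ref{1.2}), checking via (\ref{1.3}) that the induced metric is $e^{-2f}|dz|^2$. The extra details you supply (isotropy of $\phi$, absence of periods on simply connected neighbourhoods, and allowing isolated zeros of $a$ instead of assuming $a$ non-vanishing on a small disc) are only minor elaborations of the paper's argument.
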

\begin{proof}
(1). $\Rightarrow$ If a such isometric immersion exists, then from the discussion in Section 2, there are Weierstrass data $\eta=\alpha dz$ and $g$ satisfying $e^{-f}=\frac{1}{2}|\alpha|(1+|g|^2)$. Since $\alpha$ can only have a zero of order $2m$ where $g$ has a pole of order $m$, we may find two holomorphic functions $a$ and $b$ such that
\begin{align*}
a^2=\frac{\alpha}{2},\quad b=ag,
\end{align*}
thus $e^{-f}=|a|^2+|b|^2$.

(2). $\Leftarrow$ If there are two holomorphic functions $a$ et $b$ such that $e^{-f}=|a|^2+|b|^2$, then $a$ can be supposed to be non-vanishing on a disc $D$ centered at $x$. Let us define $\alpha:=2a^2$, $g:=\frac{b}{a}$, then $\eta:=\alpha dz$ is a holomorphic 1-form and $g$ is a meromorphic function. Additionally, zeroes of $\eta$ are compatible with the poles of $g$ as mentioned in Section 2.  Taking $(g,\eta)$ as Weierstrass data, we may find an isometric minimal immersion into $\mathbb{R}^3$ whose metric is $g_{\Omega}$.
\end{proof}

Moreover, Calabi has proven the following theorem of rigidity (see \cite{calabi1968quelques}, \cite{benoit2016survey}):

\begin{theorem}\label{thm2.3}
Two minimal isometric immersions from a simply connected surface $M$ into $\mathbb{R}^3$ are associate, up to the action of $Isom(\mathbb{R}^3)$ on $\mathbb{R}^3$.
\end{theorem}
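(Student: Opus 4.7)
The plan is to identify the Hopf differential of a minimal immersion as an intrinsic object up to a global phase, and then invoke Bonnet's fundamental theorem of surface theory. Given two minimal isometric immersions $X,\tilde X:M\to\mathbb{R}^3$ with Weierstrass data $(g,\eta=\alpha\,dz)$ and $(\tilde g,\tilde\eta=\tilde\alpha\,dz)$, I would first attach to each of them the Hopf differential, namely the holomorphic quadratic differential $h:=\eta\,dg=\alpha g'\,dz^2$. Since the immersion is minimal, $X_{z\bar z}$ is tangential, so the second fundamental form is completely encoded by $h$ via the formula $II=2\,\mathrm{Re}(h)$.

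The next step is a short computation using (\ref{1.3}) and (\ref{1.4}) which yields the pointwise identity
\[
|h|^{2}_{g_M}=-K,
\]
so that the norm of the Hopf differential is an intrinsic invariant of $(M,g_M)$. Because $X$ and $\tilde X$ induce the same metric, this gives $|h|=|\tilde h|$ pointwise. In any local chart the quotient $\tilde h/h$ is meromorphic with modulus one wherever defined; since zeros of $h$ and of $\tilde h$ occur with equal multiplicity (both measure $-K$), the quotient extends to a globally defined holomorphic function of constant modulus one on the connected surface $M$, and the maximum modulus principle forces it to be a constant $e^{i\theta}$, so that $\tilde h=e^{i\theta}h$.

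Now the associate immersion $X_{\theta}$, well defined on the simply connected surface $M$ by integrating $e^{i\theta}\phi\,dz$, has Weierstrass data $(g,e^{i\theta}\eta)$. By (\ref{1.3}) its induced metric remains $g_M$, and its Hopf differential is $e^{i\theta}h=\tilde h$; therefore $X_{\theta}$ and $\tilde X$ share both the first and the second fundamental forms. The classical fundamental theorem of surfaces applied on the simply connected domain $M$ then furnishes a unique $F\in\mathrm{Isom}(\mathbb{R}^3)$ with $\tilde X=F\circ X_{\theta}$, which is the statement to be proved.

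The point that requires the most care is the norm computation $|h|^{2}_{g_M}=-K$ together with the matching-of-zeros argument that lets $\tilde h/h$ extend holomorphically across the branch points of the Gauss map. Simple connectedness of $M$ enters invisibly but essentially at two places: to produce a single valued primitive of $e^{i\theta}\phi\,dz$ and thus globalize the associate family, and to invoke Bonnet's theorem without monodromy obstruction.
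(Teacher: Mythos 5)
The paper does not actually prove Theorem \ref{thm2.3}: it is quoted as Calabi's rigidity theorem with references to \cite{calabi1968quelques} and \cite{benoit2016survey}, so there is no in-paper argument to compare against. Your proposal is, in substance, the classical proof of this result, and it is correct: the identity $|h|^2_{g_M}=-K$ (which does follow from (\ref{1.3}) and (\ref{1.4}) with $h=\eta\,dg$) shows the metric determines the modulus of the holomorphic Hopf differential, holomorphy plus constant modulus of the ratio forces $\tilde h=e^{i\theta}h$, the associate immersion $X_\theta$ then has the same first and second fundamental forms as $\tilde X$, and Bonnet's fundamental theorem on the simply connected $M$ produces the rigid motion. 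Three small points deserve attention. First, the quotient argument presupposes $h\not\equiv 0$; if $h\equiv 0$ then $K\equiv 0$ and both immersions are totally geodesic, so the conclusion is immediate, but this degenerate case should be noted. Second, with your normalization $h=\eta\,dg=\alpha g'\,dz^2$ the correct formula is $II=-\mathrm{Re}(h)$ rather than $2\,\mathrm{Re}(h)$ (the factor $\langle X_{zz},N\rangle=-\tfrac12\alpha g'$); this is harmless because your argument only uses that $II$ is a fixed real-linear function of $h$, the same for both immersions. Third, you tacitly choose an orientation and unit normal for $\tilde X$; the ambiguity only changes $\tilde h$ by a sign and the congruence by a reflection, both of which are absorbed by the phase $e^{i\theta}$ and by allowing all of $\mathrm{Isom}(\mathbb{R}^3)$ in the statement, so the argument is robust, but a sentence acknowledging this would make the proof complete.
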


\begin{remark}\label{rem2.1}
This theorem tells us that the minimal isometric immersion from a simply connected surface $M$ into $\mathbb{R}^3$ is unique up to the action of $Isom(\mathbb{R}^3)$ and the $\mathbb{S}^1$-action (associate minimal immersions). Combining proposition \ref{prop2.1} and Calabi's theorem \ref{thm2.3}, we may locally identify the non-positively curved Ricci surface with the equivalent class of its minimal isometric immersions into $\mathbb{R}^3$. This identification will be of great significance in our further discussion.
\end{remark}
We are now interested in studying how the identification mentioned above allows us to do with the Weierstrass data. Firstly, it is easy to be seen from (\ref{1.2}) that the Weierstrass data remain invariant under translations in $\mathbb{R}^3$. Secondly, definition \ref{def1.1} implies that we can multiply $\eta$ by an element $e^{i\theta}\in \mathbb{S}^1$ while keeping $g$ fixed. Thirdly, let us make a rotation in $\mathbb{R}^3$, i.e., applying an element $A\in SO(3,\mathbb{R})$ to Weierstrass representation (\ref{1.2}), then the new surface $\tilde{X}$ and its Gauss map $\tilde{G}$ are written as
\begin{align}\label{2.2}
\tilde{X}=AX,\quad \tilde{G}=AG.
\end{align}
Owing to the fact that $\frac{\partial f}{\partial z}=2\frac{\partial \textrm{Re}f}{\partial z}$ for every holomorphic function $f$, we obtain
\begin{align}\label{2.3}
\tilde{\phi}:=2\frac{\partial \tilde{X}}{\partial z}=A\phi.
\end{align}
It is known that the stereographic projection $\pi$ and rotations preserve angles, so the composite map
\begin{align*}
\pi\circ A\circ\pi^{-1}:\bar{\mathbb{C}}\rightarrow\bar{\mathbb{C}}
\end{align*}
is conformal, hence an element of $Aut(\bar{\mathbb{C}})=PSL(2,\mathbb{C})$. In order to find the new Weierstrass data, we need the following classical result whose proof is standard.
\begin{lemma}\label{lem2.1}
The map
\begin{align*}
\varphi:SO(3,\mathbb{R})&\rightarrow PSL(2,\mathbb{C})\\
A&\mapsto\pi\circ A\circ\pi^{-1}
\end{align*}
is an injective Lie group homomorphism with $\textrm{Im}\varphi=PSU(2)$, hence it induces an isomorphism $SO(3,\mathbb{R})\simeq PSU(2)$.
\end{lemma}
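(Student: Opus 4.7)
The plan is to verify the claim in four stages: well-definedness of $\varphi$ (image lies in $PSL(2,\mathbb{C})$), the homomorphism and smoothness properties, injectivity, and the identification of the image with $PSU(2)$.

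First, any rotation $A\in SO(3,\mathbb{R})$ preserves the round metric on $\mathbb{S}^2$, so it is a conformal diffeomorphism. Since stereographic projection $\pi$ is conformal as well, the composition $\pi\circ A\circ\pi^{-1}$ is a conformal automorphism of $\bar{\mathbb{C}}$, and by the classical identification $\mathrm{Aut}(\bar{\mathbb{C}})=PSL(2,\mathbb{C})$ it is a M\"obius transformation; thus $\varphi$ is well-defined. The homomorphism property is immediate from
\begin{align*}
\varphi(A_1A_2) = \pi\circ A_1A_2\circ\pi^{-1} = (\pi\circ A_1\circ\pi^{-1})\circ(\pi\circ A_2\circ\pi^{-1}) = \varphi(A_1)\varphi(A_2),
\end{align*}
and smoothness is inherited from the smoothness of composition. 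Injectivity follows because $\varphi(A)=\mathrm{id}_{\bar{\mathbb{C}}}$ forces $A$ to fix every point of $\mathbb{S}^2$, whence $A=I$.

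The substantive step is to establish $\mathrm{Im}\,\varphi = PSU(2)$. For the inclusion $\mathrm{Im}\,\varphi\subseteq PSU(2)$, I would use the fact that $PSU(2)$ coincides with the subgroup of $PSL(2,\mathbb{C})$ consisting of M\"obius transformations preserving the spherical (Fubini--Study) metric on $\bar{\mathbb{C}}$; since this metric is the push-forward of the round metric on $\mathbb{S}^2$ under $\pi$ and $A$ is an isometry of the round metric, $\varphi(A)$ is a spherical isometry and therefore lies in $PSU(2)$. A more hands-on alternative is to compute $\varphi$ explicitly on rotations about the three coordinate axes, check that each image can be represented by a matrix in $SU(2)$ of the form $z\mapsto(az+b)/(-\bar{b}z+\bar{a})$ with $|a|^2+|b|^2=1$, and invoke the fact that such rotations generate $SO(3,\mathbb{R})$.

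For the reverse inclusion, I would argue by dimensions and connectedness: $\varphi$ is an injective Lie group homomorphism from a connected Lie group of dimension $3$ into the connected Lie group $PSU(2)$, also of dimension $3$. By the closed subgroup theorem $\mathrm{Im}\,\varphi$ is an embedded Lie subgroup, and having the same dimension as the target it is open; combined with the connectedness of $PSU(2)$ this forces $\mathrm{Im}\,\varphi = PSU(2)$. The main obstacle is a matter of taste in presenting the inclusion $\mathrm{Im}\,\varphi\subseteq PSU(2)$: one either quotes the characterization of $PSU(2)$ as the isometry group of the spherical metric, or carries out the direct calculation on axis rotations. Neither is deep, but the rest of the proof is essentially formal.
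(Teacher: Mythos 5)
Your argument is correct, and in fact it supplies more than the paper does: the paper simply declares this lemma ``classical'' and omits the proof entirely, so there is no official argument to compare against. Your four-stage plan (well-definedness, homomorphism/smoothness, injectivity, identification of the image) is the standard one and each step works. Two small points are worth making explicit if you write it up. First, conformality alone only tells you that $\pi\circ A\circ\pi^{-1}$ is conformal in the Riemannian sense, which would also allow anti-holomorphic maps; you need to add that $A\in SO(3,\mathbb{R})$ preserves orientation, so the conjugate is an orientation-preserving conformal diffeomorphism of $\bar{\mathbb{C}}$ and hence lies in $\mathrm{Aut}(\bar{\mathbb{C}})=PSL(2,\mathbb{C})$. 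Second, your appeal to the closed subgroup theorem in the surjectivity step requires knowing that $\mathrm{Im}\,\varphi$ is closed; this follows from the compactness of $SO(3,\mathbb{R})$ and the continuity of $\varphi$, and the fact that $\dim\mathrm{Im}\,\varphi=3$ uses that injectivity forces the differential of $\varphi$ at the identity to be injective. With those remarks inserted, the openness-plus-connectedness argument closes the proof; alternatively, the surjectivity can be seen directly by noting that every element of $PSU(2)$ is a holomorphic isometry of the spherical metric, so conjugating back by $\pi$ yields an orientation-preserving isometry of the round $\mathbb{S}^2$, i.e.\ an element of $SO(3,\mathbb{R})$, which is the slightly more elementary route.
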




Now we are going to determine the new Weierstrass data after the rotation. As we have already seen, $g=\pi\circ G:M\rightarrow \bar{\mathbb{C}}$ is the Gauss map, hence
\begin{align}\label{2.5}
\tilde{g}=\pi\circ A\circ G=\pi\circ A\circ\pi^{-1}\circ\pi\circ G=\varphi(A).g,
\end{align}
where $\varphi(A).g$ denotes the action of $\varphi(A)$ on $g$. Then by combining (\ref{1.2}), (\ref{2.3}) and (\ref{2.5}), we may get the new Weierstrass data $(\tilde{g},\tilde{\eta})$.




Let us consider the Hopf differential defined by $Q:=dg\otimes\eta$. It is a direct observation from the previous calculation that $Q$ is invariant under rotations in $\mathbb{R}^3$. Therefore, the new Weierstrass data obtained by applying $A\in SO(3,\mathbb{R})$ are computed as
  \begin{equation}\label{2.9}
\left\{
\begin{aligned}
&~\tilde{g}=\varphi(A).(g),\\
&~\tilde{\eta}=\frac{Q}{d\tilde{g}}.\\\end{aligned}
\right.
\end{equation}

From the proof of proposition \ref{prop2.1}, a non-positively curved Ricci surface can be locally equipped with the Weierstrass data. It is worthwhile noticing that the Weierstrass data may not be globally defined on Ricci surfaces in general. However, they are always well-defined on the universal cover of a Ricci surface. In fact, for a non-positively curved Ricci surface $(M,g_M)$, the metric defined by $g_1:=(-K)g_M$ is a metric of constant Gauss curvature 1, possibly with isolated conical singularities at zeroes of $K$ (see \citep{moroianu2015ricci}, \cite{troyanov1989metrics}). It induces on the universal cover $\tilde{M}$ of $M$ a metric $d\sigma^2$ of constant Gauss curvature 1. Then there exists a meromorphic function $g: \tilde{M}\rightarrow \bar{\mathbb{C}}$ such that 
\begin{align}\label{2.10}
d\sigma^2=\frac{4dg d\bar{g}}{(1+\vert g\vert^2)^2}.
\end{align}
Moreover, this function $g$ is unique up to an action of $PSU(2)$ as in (\ref{2.5}). Let us take $g$ as the Gauss map, then the Weierstrass data can be constructed with the help of proposition \ref{prop2.1} and theorem \ref{thm2.3}. To be convenient, we still call it the Weierstrass data of this Ricci surface. A point that should be mentioned is that the metric $d\sigma^2$ is invariant under the action of the deck transformation group $Deck(\tilde{M}/M)$. It is immediate from the uniqueness of $g$ that every deck transformation of $\tilde{M}$ corresponds to an element in $PSU(2)$. More precisely, if we identify the deck transformation group $Deck(\tilde{M}/M)$ with the fundamental group $\pi_1(M)$ of $M$, then there exists a monodromy representation $\rho: \pi_1(M)\rightarrow PSU(2)$ such that
\begin{align}\label{2.11}
g\circ\tau^{-1}=\rho(\tau).g,
\end{align}
for all $\tau\in \pi_1(M)$. This monodromy representation will be very important in our further discussion.

Even though the Hopf differential $Q$ is only defined on the universal cover $\tilde{M}$ in general, its modulus $|Q|$ is actually well-defined on the Ricci surface $M$ itself. To see this, since $Q$ is invariant under the action of $PSU(2)$ and it may differ just by a possible multiplication of $e^{i\theta}\in\mathbb{S}^1$, the modulus $|Q|$ is therefore independant of all these actions. Hence it descends to a well-defined quantity on $M$.

Thanks to the identification mentioned in remark \ref{rem2.1}, all the Weierstrass data under these operations are regarded to be equivalent. This equivalence class of Weierstrass data is exactly what the Weierstrass data of the non-positively curved Ricci surface means. From now on, once a representative pair of Weierstrass data is fixed, we are allowed to apply all the operations discussed above.

To finish this section, we also give an example of Ricci surfaces. For CMC-1 immersions in the hyperbolic 3-space $\mathcal{H}^3$, we may define an analogue of the Weierstrass representation (see \citep{bryant1987surfaces}, \cite{umehara1993complete}). We know from the Lawson's correspondence (see \citep{lawson1970complete}) that CMC-1 immersions in $\mathcal{H}^3$ are locally isometric to minimal immersions in $\mathbb{R}^3$, hence it provides some Ricci surfaces. A typical example is the following: 
\begin{example}\label{ex2.1}
(Catenoid cousins) Let $M=\mathbb{C}\backslash\{0\}$, the representative Weierstrass data are $g(z)=z^{\mu}$ and $\eta=az^{-\mu-1}dz$, with $a, \mu\in\mathbb{R}_{+}^*$ and $\mu\neq 1$. In particular, if $\mu$ is an integer, then it is isometric to a $\mu$-fold cover of catenoid.
\end{example}

\section{Definition of catenoidal ends}

In this section we plan to define catenoidal ends for a non-positively curved Ricci surface with the help of the Weierstrass data. Before beginning, we need some preparation.

\begin{definition}\label{def3.1}
Let $(M,g_M)$ be a complete surface, then the integral
\begin{align}\label{3.1}
K(M):=\int_{M}KdA
\end{align}
is called the total curvature of $M$, where $dA$ is the area element and $K$ is the Gauss curvature.  
\end{definition}

It is well-know that every two-dimensional orientable smooth Riemannian manifold can be equipped with a compatible complex structure so that it may also be regarded as a Riemann surface. We announce here an important theorem proved by Huber \cite{huber1958subharmonic} and Osserman \cite{osserman2013survey}:

\begin{theorem}\label{thm3.1}
Let $(M,g_M)$ be a complete non-positively curved orientable Riemannian surface with finite total curvature. Then there exists a compact Riemann surface $S_k$ of genus $k$ and a finite number of points $p_1,...,p_r$ on $S_k$ such that $M$ is biholomorphic to $S_k\backslash\{p_1,...,p_r\}$.
\end{theorem}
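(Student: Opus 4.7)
The plan is to equip $M$ with its intrinsic conformal structure via isothermal coordinates and then apply the classical theory of subharmonic functions with integrable Laplacian, following Huber's approach. Locally one writes $g_M = e^{2u}\,|dz|^2$ for some smooth $u$, so that the Gauss curvature is $K = -e^{-2u}\Delta u$. The hypothesis $K \leq 0$ then becomes $\Delta u \geq 0$, i.e.\ $u$ is subharmonic, while finite total curvature reads $\int_M |\Delta u|\,|dz|^2 = \int_M (-K)\,dA < \infty$, giving an $L^1$-bound on the (distributional) Laplacian of the conformal factor. This $L^1$-bound, together with subharmonicity, is the essential input.

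Next I would show that $M$ has finite topological type. Exhaust $M$ by compact domains $\Omega_n$ with smooth boundary and apply the Gauss--Bonnet formula
\begin{equation*}
\int_{\Omega_n} K\,dA + \int_{\partial\Omega_n} \kappa_g\,ds = 2\pi\chi(\Omega_n).
\end{equation*}
Using the $L^1$-control on $K$ together with an isoperimetric argument to bound the geodesic-curvature term along a well-chosen exhaustion, one obtains that $\chi(\Omega_n)$ stays bounded as $n \to \infty$. Hence $\chi(M)$ is finite and $M$ is homeomorphic to $\Sigma_k \setminus \{D_1,\ldots,D_r\}$, where $\Sigma_k$ is the closed orientable surface of genus $k$ and the $D_i$ are disjoint open disks. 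In particular $M$ has exactly $r$ ends, each a topological half-cylinder.

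The analytic heart of the argument is the third step: identify each end conformally with a punctured disk. Fix such an end $E$, and use uniformization to parametrize a conformal neighborhood of $E$ by an annulus $\{r_0 < |\zeta| < R_0\}$ with $R_0 \in (r_0,+\infty]$, pulling back the metric to $e^{2v}\,|d\zeta|^2$ with $v$ subharmonic and $\Delta v \in L^1$. I would then decompose $v = h + p$, where $h$ is harmonic and
\begin{equation*}
p(\zeta) = \frac{1}{2\pi}\int \log|\zeta-w|\,\Delta v(w)\,dA(w)
\end{equation*}
is a logarithmic potential controlled by the $L^1$-norm of $\Delta v$. Estimating the growth of $v$ as $|\zeta| \to R_0$ and integrating $e^v\,|d\zeta|$ along a radial ray show that $R_0 < \infty$ would place the boundary at finite distance in $g_M$, contradicting completeness. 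Hence $R_0 = +\infty$; an inversion $\zeta \mapsto 1/\zeta$ then turns $E$ into a punctured disk, and gluing these disks onto the compact core yields the biholomorphism $M \simeq S_k \setminus \{p_1,\ldots,p_r\}$.

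The main obstacle is precisely this third step: extracting sharp growth estimates on the subharmonic function $v$ from only the $L^1$-bound on $\Delta v$ and combining them effectively with metric completeness. The harmonic part $h$ may have logarithmic monodromy on the annulus, and the potential $p$ is not uniformly bounded, so the final contradiction with completeness requires a careful separation of the contributions of $\Delta v$ near and far from a given point. This delicate piece of classical complex analysis is exactly Huber's contribution, and once it is in place the statement of the theorem follows formally.
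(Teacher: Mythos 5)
The paper does not actually prove this statement: it is quoted as a classical theorem of Huber (with Osserman's survey as a second reference), so there is no internal proof to compare with. Judged on its own terms, your write-up is an outline of Huber's classical argument rather than a proof: the overall strategy (subharmonicity of the conformal factor from $K\leq 0$, the $L^1$ bound on its Laplacian from finite total curvature, finite connectivity via Gauss--Bonnet on an exhaustion, and parabolicity of each annular end) is the right one, but the two steps that carry all the content are asserted, not established. First, in the finite-topology step, the claim that ``an isoperimetric argument'' bounds $\int_{\partial\Omega_n}\kappa_g\,ds$ along a well-chosen exhaustion is precisely the Cohn-Vossen/Huber finiteness argument; for an arbitrary exhaustion the boundary term is not controlled by $\int_M|K|\,dA$, and constructing an exhaustion for which it is controlled is a genuine piece of work that you do not supply.

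Second, and more seriously, the end analysis is incomplete at exactly the point where the theorem lives. Writing the end as $\{r_0<|\zeta|<R_0\}$ with metric $e^{2v}|d\zeta|^2$ and decomposing $v=h+p$, the Newtonian potential $p$ is indeed bounded above when $R_0<\infty$ because $\Delta v\geq 0$ is in $L^1$ and $\log|\zeta-w|$ is bounded above on a bounded set, but the harmonic part $h$ is an arbitrary harmonic function on the annulus and may blow up as $|\zeta|\to R_0$; nothing in your argument rules this out, so the conclusion that some radial ray has finite length (contradicting completeness) does not follow from what you wrote. You acknowledge this yourself by saying the ``delicate piece of classical complex analysis is exactly Huber's contribution,'' which is an admission that the key estimate is missing. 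As it stands the proposal is a correct roadmap of the known proof with its essential analytic core left as a black box; either carry out Huber's growth estimates (or an equivalent parabolicity criterion for ends of finite total curvature) or, as the paper does, cite Huber and Osserman and treat the statement as known.
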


Theorem \ref{thm3.1} permits us to identify a complete non-positively curved Ricci surface which has finite total curvature with a Riemann surface $S_k\backslash\{p_1,...,p_r\}$. It is natural for us to study the Ricci surface with some particular assumptions at the punctured points.

Example \ref{ex1.2} and \ref{ex2.1} showed some complete non-positively curved Ricci surfaces which are biholomorphic to $\bar{\mathbb{C}}\setminus\lbrace0,\infty\rbrace$. Their universal covers are isometric to that of a classical catenoid. In addition, the Hopf differential $Q$ of these examples is meromorphic near each punctured points. We have seen in the previous section that the Hopf differential $Q$ is independent of isometries of $\mathbb{R}^3$, and it is unique up to a multiplication by an element $e^{i\theta}\in\mathbb{S}^1$. Hence we are inspired to study the order of $Q$ at punctured points. At the punctured point $0$, it is not difficult to observe that $\textrm{ord}_{0}\ Q=-2$. Similarly, at the punctured point $\infty$, we replace $z$ by $\frac{1}{w}$, then an easy calculation shows that $\textrm{ord}_{\infty}\ Q=-2$ still holds. The condition $\textrm{ord}\ Q=-2$ is therefore valuable for us. Moreover, it is worth remarking that $\textrm{ord}\ Q=-2$ does not depend on the choice of conformal parameters. For these reasons, we may give the following definition.
\begin{definition}\label{def3.2}
A catenoidal end of a non-positively curved Ricci surface $M\simeq S_k\setminus\lbrace p_1,p_2,...,p_r \rbrace$ with finite total curvature is a punctured point $p_i$ which possesses a neighborhood where the Hopf differential $Q$ is well-defined and meromorphic, satisfying $\textrm{ord}_{p_i}\ Q=-2$.
\end{definition}

For a Ricci surface $M\simeq \bar{\mathbb{C}}\setminus\lbrace p_1,p_2,...,p_r \rbrace$ with $r$ catenoidal ends, the Hopf differential $Q$ is a well-defined holomorphic quadratic differential on $M$. In fact, the fundamental group $\pi_1(M)$ is generated by closed curves $\gamma_1, ..., \gamma_r$ surrounding each punctured point. By definition \ref{def3.2}, $Q$ is independent of all the generators of $\pi_1(M)$, thus well-defined on $M$. However, this property fails to hold for Ricci surfaces with positive genus in general. This is because there will be other generators of the fundamental group $\pi_1(M)$ than the closed curves mentioned above, hence $Q$ may not be well-defined on $M$ itself.   

Taking advantage of (\ref{1.3}) and (\ref{1.4}), we may observe that the modulus $|Q|$ of $Q$ coincides with the metric $\sqrt{-K}g_M$ defined in theorem \ref{thm 2.1}. Therefore, around a catenoidal end which corresponds to $z=0$ of a Ricci surface $(M,g_M)$, the induced flat metric $\sqrt{-K}g_M$ can be locally written as 
\begin{align}\label{3.3}
\sqrt{-K}g_M=\left(\frac{a}{|z|^2}+o\left(\frac{1}{|z|^2}\right)\right)\left|dz\right|^2,
\end{align} 
with $a\in\mathbb{R}\setminus\lbrace 0 \rbrace$. Actually, the consverse is also true. To see this, let us assume that the induced metric $\sqrt{-K}g_M$ of a non-positively curved Ricci surface has a local expression as in (\ref{3.3}) near a punctured point $z=0$. If we consider a closed curve $\gamma$ surrounding this point, then by Calabi's rigidity theorem \ref{thm2.3} and the definition of $Q$, we must have 
\begin{align*}
Q\circ\tau^{-1}=e^{2\pi i\theta}Q
\end{align*}
for some $\theta\in[0,1)$, where $\tau=[\gamma]\in\pi_1(M)$. It can be checked that $P:=z^{-\theta}Q$ is a well-defined meromorphic quadratic differential near $0$. Moreover, the modulus of $P$ takes the form
\begin{align*}
|P|=|z|^{-\theta}|Q|=\left(\frac{a}{|z|^{2+\theta}}+o\left(\frac{1}{|z|^{2+\theta}}\right)\right)\left|dz\right|^2,
\end{align*} 
it implies that $0$ is a pole of $P$ and $\theta=0$. Hence $Q$ is locally well-defined near $0$ and $z=0$ is a catenoidal end of the Ricci surface $M$. Consequently, the catenoidal end of a non-positively curved Ricci surface may be equivalently defined to be a punctured point where the induced flat metric $\sqrt{-K}g_M$ has the expression as in (\ref{3.3}).

Apart from the metric $\sqrt{-K}g_M$, we can say even more about the Weierstrass data of the Ricci surface. The lemma below provides locally an explicit description of the catenoidal end of Ricci surfaces.
\begin{lemma}\label{lem3.1}
For a non-positively curved Ricci surface $(M,g_M)$ with finite total curvature, it has a catenoidal end at the punctured point corresponding to $z=0$ if and only if there exists an element $A\in SO(3,\mathbb{R})$ such that after the rotation, the Weierstrass data can be locally written as

  \begin{equation}\label{3.2}
  \left\{
\begin{aligned}
~\tilde{\eta}&=\tilde{\alpha}dz=\left(\frac{a}{z^{\lambda+1}}+o\left(\frac{1}{z^{\lambda+1}}\right)\right)dz,\\
~\tilde{g}&=bz^{\lambda}+o(z^{\lambda}),\\
\end{aligned}
\right.
\end{equation}
where $a,b\in\mathbb{C}\backslash\{0\}$ and $\lambda\in\mathbb{R}^{*}_{+}$.
\end{lemma}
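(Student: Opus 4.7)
The plan is to treat the two implications separately; the reverse direction is a direct computation while the forward direction contains the substance.

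For the reverse direction ($\Leftarrow$), I would substitute (\ref{3.2}) into $Q=d\tilde g\otimes\tilde\eta$: with $d\tilde g = b\lambda z^{\lambda-1}(1+o(1))dz$ and $\tilde\eta = az^{-\lambda-1}(1+o(1))dz$, the product becomes $Q=ab\lambda\, z^{-2}(1+o(1))dz^2$, where the multi-valued factors $z^{\lambda-1}$ and $z^{-\lambda-1}$ combine into the single-valued $z^{-2}$. Since $ab\lambda\neq 0$, $Q$ is meromorphic near $z=0$ with $\textrm{ord}_0\,Q=-2$. As rotations leave $Q$ invariant, the same property holds for the original Weierstrass data, so $z=0$ is a catenoidal end.

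For the forward direction ($\Rightarrow$), assume $z=0$ is a catenoidal end. I would first exploit the monodromy: the representation $\rho:\pi_1(M)\to PSU(2)$ from (\ref{2.11}), evaluated on a loop $\tau_0$ around the puncture, lies in the compact group $PSU(2)$, hence is elliptic and conjugate to a diagonal action $g\mapsto e^{2i\beta}g$. By Lemma \ref{lem2.1} this conjugation is realized by a rotation $A\in SO(3,\R)$, so after applying $A$ through (\ref{2.9}) I may assume the new monodromy is diagonal. On the universal cover of the punctured disc $\Delta^*$ around the end I then decompose
\[ \tilde g(z)=z^{\lambda}h(z),\qquad \tilde\eta=z^{-\lambda-1}\hat\alpha(z)\,dz, \]
with $h$ and $\hat\alpha$ single-valued meromorphic functions on $\Delta^*$ and $\lambda\in\R$ determined modulo $\Z$ by $e^{2\pi i\lambda}=e^{2i\beta}$; the exponent of $z$ in $\tilde\eta$ is forced by $\tilde\eta\circ\tau_0^{-1}=e^{-2i\beta}\tilde\eta$, itself a consequence of the monodromy-invariance of $Q$. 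A direct computation yields
\[ Q=\frac{\hat\alpha(z)\bigl(\lambda h(z)+zh'(z)\bigr)}{z^{2}}\,dz^{2}, \]
so $\textrm{ord}_0\,Q=-2$ is equivalent to the single-valued function $\hat\alpha\cdot(\lambda h+zh')$ being holomorphic and non-vanishing at $0$.

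Using the integer ambiguity in $\lambda$, I would normalise so that $\textrm{ord}_0 h=0$, i.e.\ $h(0)=b\in\C\setminus\{0\}$; then $\hat\alpha$ automatically extends holomorphically at $0$ with $\hat\alpha(0)=a\in\C\setminus\{0\}$, giving the announced form (\ref{3.2}). If the resulting $\lambda$ is negative, a further rotation $g\mapsto -1/g$ (lying in $PSU(2)\simeq SO(3,\R)$) interchanges $0$ and $\infty$ in $\bar{\C}$ and replaces $\lambda$ by $-\lambda$, so one can always arrange $\lambda\in\R^*_+$. The delicate point---and what I expect to be the main obstacle---is the meromorphic extension of $h$ at the puncture: a priori $h$ is only single-valued meromorphic on $\Delta^*$ and could carry an essential singularity at $0$. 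I plan to rule this out using the finite total curvature hypothesis built into the definition of catenoidal end: the area of $d\sigma^2=(-K)g_M$ on $M$ is bounded by $-K(M)<\infty$, and $d\sigma^2$ is precisely the pull-back $\tilde g^*(g_{\s^2})$ of the round metric, so an essential singularity of $h$ would force $\tilde g$ to hit generic values of $\bar{\C}$ infinitely often on any neighborhood of $0$ (a Picard-type statement, applied on a finite cover when $e^{2\pi i\lambda}$ has finite order, and otherwise through the developing-map description of constant-curvature-one metrics with conical singularities à la Troyanov), making the pulled-back area infinite---a contradiction.
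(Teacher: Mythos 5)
Your overall strategy coincides with the paper's: diagonalize the monodromy of the loop around the puncture by a rotation (via Lemma \ref{lem2.1}), write $\tilde g=z^{\lambda}h$ and $\tilde\eta=z^{-\lambda-1}\hat\alpha\,dz$ with $h,\hat\alpha$ single-valued, invoke finite total curvature to exclude an essential singularity of $h$ at $0$ (the paper simply quotes Bryant, Prop.\ 4, which is essentially the area/Picard statement you sketch), and then normalize by further rotations; the backward implication is the same direct computation of $\mathrm{ord}_0\,Q$.

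However, your final normalization has a genuine gap: after arranging $\mathrm{ord}_0 h=0$, the exponent $\lambda$ can be \emph{zero}. This happens exactly when the monodromy of the loop is trivial and $g$ extends across $0$ with a finite nonzero value $b_0$. In that case your claim that $\hat\alpha$ automatically extends holomorphically with $\hat\alpha(0)\neq0$ fails, since $\lambda h+zh'=zh'$ vanishes at $0$ and $\mathrm{ord}_0\,Q=-2$ then forces $\hat\alpha$ to have a pole; more importantly, the only move you allow, $g\mapsto-1/g$, sends $b_0$ to $-1/b_0$ and leaves $\lambda=0$, so it cannot produce $\lambda\in\R^{*}_{+}$ as (\ref{3.2}) requires. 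One needs an additional rotation moving the value $b_0$ to $0$ (or $\infty$), for instance $\tilde g=\frac{g-b_0}{\bar b_0 g+1}$, which lies in $PSU(2)$ and is exactly how the paper treats its case $\mu=0$: after this rotation $\tilde g$ vanishes to some integer order $n\geq1$ (non-constancy of $g$ being guaranteed by $\mathrm{ord}_0\,Q=-2$), and the form (\ref{3.2}) follows. With this case added your argument is complete and essentially identical to the paper's; note also that your Picard-type justification of meromorphy of $h$ at $0$ is only sketched for monodromy of infinite order, where the paper relies on Bryant's result.
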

\begin{proof}
$\Leftarrow$ If there is an element $A\in SO(3,\mathbb{R})$ such that after the rotation, the Weierstrass data $\tilde{\eta}$ and $\tilde{g}$ have form of (\ref{3.2}), then it can be easily checked that $\textrm{ord}\ Q=-2$ is verified at that point. 

$\Rightarrow$ Conversely, given $\textrm{ord}\ Q =-2$ at the punctured point $z=0$, we are obliged to consider an element $\tau=[\gamma]\in\pi_1(M)$, where $\gamma$ is a closed curve around this punctured point. Since each rotation of $\bar{\mathbb{C}}$ is conjugate to a rotation fixing $0$ and $\infty$, in use of lemma \ref{lem2.1}, we may suppose that the function $g$ satisfies
\begin{align*}
g\circ\tau^{-1}=e^{2\pi i\theta}g,
\end{align*}
with $\theta\in[0,1)$. One may verify that $h(z):=z^{-\theta}g$ is a well-defined meromorphic function near $z=0$. The condition that the total curvature of $M$ is finite tells us that $h$ is actually meromorphic at $0$  (see \citep{bryant1987surfaces}, Prop 4), thus the Gauss map $g$ can be locally written as $g=z^{\mu}(\sum\limits_{j=0}^{\infty}{b_jz^j})$ with $\mu\in\mathbb{R}$, $b_j\in\mathbb{C}$ and $b_0\neq0$.  

(1). Assume that $\mu>0$, then it is immediate from the definition of $Q$ that 

\begin{equation*}
\left\{
\begin{aligned}
&~g=z^{\mu}\left(\sum\limits_{j=0}^{\infty}{b_jz^j}\right),\\
&~\eta=z^{-\mu-1}\left(\sum\limits_{j=0}^{\infty}{a_jz^j}\right)dz,\\
\end{aligned}
\right.
\end{equation*}
with $a_0,b_0\in\mathbb{C}\backslash\{0\}$. This is the expression (\ref{3.2}) and we can just take $A=I$. 

(2). Now suppose that $\mu\leq0$.

\quad(i). If $\mu=0$, then $g=b_0+\sum\limits_{j=1}^{\infty}{b_jz^j}$. The condition $\textrm{ord}\ Q =-2$ assures that $g$ is not a constant. Thus we may take a rotation $\tilde{g}=\frac{g-b_0}{\bar{b_0}g+1}$. Since in this case $\tilde{g}$ is meromorphic near $z=0$ and $\tilde{g}(0)=0$, it must have the form 
\begin{equation*}
\tilde{g}=z^n\left(\sum\limits_{j=0}^{\infty}{c_jz^j}\right)
\end{equation*}
for some $n\in\mathbb{N}^*$, where $c_j\in\mathbb{C}$ and $c_0\neq0$.
The isomorphism constructed in lemma \ref{lem2.1} implies immediately the existence of such an element $A\in SO(3,\mathbb{R})$. In addition, the corresponding $\tilde{\eta}$ can be proved to be 
\begin{equation*}
\tilde{\eta}=z^{-n-1}\left(\sum\limits_{j=0}^{\infty}{a_jz^j}\right)dz,
\end{equation*}
with $a_j\in\mathbb{C}$ and $a_0\neq0$. This satisfies the expression (\ref{3.2}).

\quad(ii). In the case $\mu<0$, we will apply the transformation $\tilde{g}=-\frac{1}{g}$. It is easy to see that the new Weierstrass data have the form as in (\ref{3.2}). The corresponding $A\in SO(3,\mathbb{R})$ is also given by lemma \ref{lem2.1}. This completes the proof.
\end{proof}

With the help of this lemma, the Ricci metric can be locally written as
\begin{align}\label{3.4}
g_M=\left(\frac{b}{|z|^{2\lambda+2}}+O\left(\frac{1}{|z|^2}\right)\right)\left|dz\right|^2,
\end{align} 
where $b\in\mathbb{R}\setminus\lbrace 0 \rbrace$ and $\lambda\in\mathbb{R}^{*}_{+}$. Moreover, taking (\ref{3.3}) and (\ref{3.4}) into consideration, we obtain immediately from Ricci's theorem \ref{thm 2.1} a local expression for $(-K)g_M$ which is  
\begin{align}\label{3.5}
(-K)g_M=\left(c|z|^{2\lambda-2}+o\left(|z|^{2\lambda-2}\right)\right)\left|dz\right|^2,
\end{align}
with $c\in\mathbb{R}\setminus\lbrace 0 \rbrace$. Since $\lambda>0$, the metric $(-K)g_M$ has a conical singularity of order $\lambda-1>-1$ at the catenoidal end $z=0$.

It should be noticed that this lemma is just a local result. In order to do a global discussion, we need to study the monodromy representation introduced in the previous section. Inspired by Umehara and Yamada in their article \cite{Masaaki2000metrics}, we will give the following definitions.

\begin{definition}\label{def3.3}
Let $d\sigma^2$ be a metric with constant constant Gauss curvature 1 and finite number of conical singularities on $M$, 

i). We call it an irreducible metric if the image of the corresponding monodromy representation $\rho: \pi_1(M)\rightarrow PSU(2)$ is not abelian;

ii). We call it a non-trivially reducible metric if the image of $\rho$ is abelian but not trivial;

iii). It is called a trivially reducible metric if the image of $\rho$ is trivial. 
\end{definition}

For a non-positively curved Ricci surface $(M,g_M)$ with finite total curvature and more than one ends, if the induced metric $g_1:=(-K)g_M$ is reducible, then we can apply lemma \ref{lem3.1} one by one to all the catenoidal ends. Since the image of the monodromy representation $\rho$ is abelian, the order of this procedure does not influence the final result. Hence in this case, we may do some global analysis. However, this method does not work if the induced metric $g_1$ is irreducible.

\section{$M\simeq\mathbb{C}\setminus\lbrace0\rbrace$ with two catenoidal ends}

We are inspired by Huber's theorem to study non-positively curved Ricci surfaces with catenoidal ends. It is immediate from the Riemann uniformization theorem that the universal cover of a such Ricci surface can only be the whole plane $\mathbb{C}$ or the upper half plane $\mathbb{H}$. In the $\mathbb{C}$ case, the corresponding Ricci surface $M$ is biholomorphic to either $\mathbb{C}\setminus\lbrace 0 \rbrace$ or $\mathbb{C}$. The following theorem affirms that the case when $M\simeq\mathbb{C}$ cannot appear.

\begin{theorem}\label{thm4.2}
There does not exist non-positively curved Ricci surface $M\simeq\mathbb{C}$ with exactly one catenoidal end.
\end{theorem}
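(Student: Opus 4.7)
The plan is to derive a contradiction by applying Liouville's theorem to the Hopf differential $Q = dg \otimes \eta$, exploiting the fact that a simply connected Ricci surface carries globally defined Weierstrass data.

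First, since $M \simeq \C$ is its own universal cover, the construction recalled after Proposition \ref{prop2.1} produces Weierstrass data $(g, \eta)$ defined globally on $M$. Writing $\eta = \alpha\, dz$ in the standard coordinate $z$ on $\C$, with $\alpha$ entire and $g$ meromorphic, the compatibility between the zeros of $\eta$ and the poles of $g$ recalled in Section 2 ensures that $Q = q(z)\, dz^2$ is a globally \emph{holomorphic} quadratic differential on $\C$; that is, $q$ is an entire function. Although $\eta$ is defined only up to the $\s^1$-action and $g$ only up to $PSU(2)$, the Hopf differential $Q$ is intrinsic up to a unimodular constant (compare (\ref{2.9})), so its pole order at any puncture is well defined.

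Next, $\C$ has a unique end, located at $\infty \in \bar{\C}$, which by hypothesis is catenoidal. In the coordinate $w = 1/z$ near $\infty$, Definition \ref{def3.2} reads $Q \sim c\, w^{-2}\, dw^2$ for some $c \neq 0$. Translating back to $z$ via $dw = -z^{-2}\, dz$ yields the asymptotic
\begin{align*}
q(z) \sim \frac{c}{z^2} \qquad \text{as } |z|\to\infty,
\end{align*}
so in particular $q$ tends to $0$ at infinity. Liouville's theorem then forces the bounded entire function $q$ to be constant, and the decay at infinity forces that constant to be $0$, contradicting $c \neq 0$.

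The only step requiring any care is the very first one — verifying that $Q$ really is single-valued and holomorphic on all of $M$, rather than merely on the universal cover; but since $\pi_1(M)$ is trivial the monodromy representation (\ref{2.11}) is trivial and this is automatic, so the argument has no substantive obstacle.
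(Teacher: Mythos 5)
Your proof is correct, but it takes a different route from the paper's. The paper first invokes Lemma \ref{lem3.1} to normalize the Weierstrass data by a rotation so that the Gauss map has a pole of some order $n$ at the single puncture; the condition $\mathrm{ord}\,Q=-2$ then forces $\mathrm{ord}\,\eta=n-1\geq 0$ there, so $\eta$ extends to a holomorphic $1$-form on all of $\bar{\mathbb{C}}$, hence $\eta\equiv 0$, giving $Q\equiv 0$ and the contradiction. You instead work directly with the rotation- and $\mathbb{S}^1$-invariant object $Q$: triviality of $\pi_1(M)$ puts the data, hence $Q$, on $M\simeq\mathbb{C}$ itself, the pole/zero compensation between $g$ and $\eta$ makes $q$ entire, and the catenoidal-end condition at $\infty$ gives $q(z)=O(|z|^{-2})$, so Liouville kills $Q$, again contradicting $\mathrm{ord}_\infty Q=-2$. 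At bottom both are the same degree count on the sphere (no nonzero holomorphic $1$-forms on $\bar{\mathbb{C}}$ versus no nonzero quadratic differentials with a single pole of order two), but your version is more economical: it needs neither the rotation normal form nor the meromorphy of $g$ at the puncture (which in Lemma \ref{lem3.1} rests on finite total curvature via Bryant's result), only Definition \ref{def3.2} and the holomorphy of the Hopf differential, and your closing remark about single-valuedness of $Q$ correctly disposes of the one point needing care. What the paper's route buys is the explicit local form of $(g,\eta)$ at the end and a first use of Lemma \ref{lem3.1}, the machinery that the classification arguments of Sections 5 and 6 then reuse.
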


\begin{proof}
Let us identify $M$ with $\bar{\mathbb{C}}\setminus\lbrace0\rbrace$, then the catenoidal end corresponds to $0$. Applying lemma \ref{lem3.1}, we may assume that the Gauss map $g$ has a pole at $0$. Since $\pi_1(M)$ is trivial, $g$ can be written as $g=z^{-n}\varphi(z)$, where $n\in\mathbb{N}^{*}$ and $\varphi$ is a meromorphic function on $\bar{\mathbb{C}}$ satisfying $\varphi(0)\neq 0$. The condition for catenoidal ends leads to the fact that $\textrm{ord}\ \eta = -2-(-n-1)=n-1\geq0$ at $0$. Therefore, $\eta$ is a holomorphic 1-form on $\bar{\mathbb{C}}$ thus $\eta\equiv0$. This implies that $Q\equiv0$, which is not possible. 
\end{proof} 

Thanks to theorem \ref{thm4.2}, we may focus on giving a classification of non-positively curved Ricci surfaces $M\simeq\mathbb{C}\setminus\lbrace0\rbrace$ with two catenoidal ends.

\begin{theorem}\label{thm4.1}
A complete non-positively curved Ricci surface $M\simeq\mathbb{C}\backslash\{0\}$ with two catenoidal ends can only be isometric to a catenoid, a catenoid cousin given as in example \ref{ex2.1} or the surface determined by the Weierstrass data
\begin{equation}\label{4.1}
g=z^n+a,\quad\quad \eta=bz^{-n-1}dz,
\end{equation}
where $a,b \in \mathbb{R}^{*}_{+}$ and $n\in\mathbb{N}^*$. Moreover, all these surfaces are mutually non-isometric.
\end{theorem}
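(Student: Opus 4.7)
The plan is to exploit the Hopf differential $Q=dg\otimes\eta$ to classify the possible Gauss maps. Since $M\simeq\bar{\mathbb{C}}\setminus\{0,\infty\}$ has fundamental group generated by a single loop, the catenoidal end hypothesis at both ends forces $Q$ to be a single-valued meromorphic quadratic differential on $M$ with poles of order exactly two at $0$ and $\infty$; hence $Q=cz^{-2}dz^2$ for some $c\in\mathbb{C}^*$. Passing to an associate surface (multiplying $\eta$ by a unit complex number), we may assume $c\in\mathbb{R}_+^*$.

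By Lemma~\ref{lem3.1} at $z=0$, after a rotation in $SO(3,\mathbb{R})$ we can write $g(z)=z^\lambda F(z)$ near $0$ with $\lambda>0$ and $F(0)\neq 0$. The monodromy of $g$ around $0$ matches that of $z^\lambda$, so $F=g/z^\lambda$ is single-valued on $M$; the finite total curvature hypothesis, combined with the catenoidal condition at $\infty$ applied in the coordinate $w=1/z$ as in the proof of Lemma~\ref{lem3.1}, forces $F$ to be meromorphic also at $\infty$. Hence $F=P/Q$ is a rational function on $\bar{\mathbb{C}}$ in lowest terms, with $P(0)Q(0)\neq 0$.

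From $Q=cz^{-2}dz^2$ we get $\eta=c\,dz/(z^2g'(z))$; the requirement that $\eta$ be a holomorphic 1-form on $M$, compatible with the pole structure of $g$ via Proposition~\ref{prop2.1}, forces $g$ to have only simple poles and no critical points on $M$. Writing
\[g'(z)=z^{\lambda-1}\,\frac{\tilde N(z)}{Q(z)^2},\qquad \tilde N(z):=(\lambda P+zP')Q-zPQ',\]
the no-critical-point condition becomes: $\tilde N$ has no zero in $\mathbb{C}\setminus\{0\}$. Since $\tilde N(0)=\lambda P(0)Q(0)\neq 0$, in fact $\tilde N$ has no zero in $\mathbb{C}$ at all, so it is a nonzero constant. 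Setting $p=\deg P$, $q=\deg Q$, $P=\sum e_j z^j$, $Q=\sum c_k z^k$ with $e_0 e_p c_0 c_q\neq 0$, the leading coefficient $e_p c_q(\lambda+p-q)$ must vanish, giving either $p=q=0$ (then $F$ is constant) or $\lambda=q-p\in\mathbb{N}^*$. In the latter case, a descending recursion on the identities
\[\sum_{l=0}^{\min(p,m)}(m-2l)\,e_{p-l}\,c_{q-m+l}=0,\qquad 1\le m\le p+q-1,\]
exploiting the antisymmetry $l\leftrightarrow m-l$ of the summand, yields $c_{q-k}=(e_{p-k}/e_p)c_q$ for $0\le k\le p$ and $c_{q-k}=0$ for $p<k<q$; evaluating the identity at $m=q$ then forces $c_0=0$ as soon as $p\ge 1$, contradicting $c_0\neq 0$. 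I expect this algebraic descent ruling out $p\ge 1$ to be the main obstacle. Hence $p=0$ and $Q(z)=c_0+c_n z^n$ with $n=\lambda$.

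There remain only two cases. If $F$ is constant, a unimodular post-rotation of $g$ together with the biholomorphism $z\mapsto kz$ normalizes the data to $g=z^\lambda$, $\eta=(c/\lambda)z^{-\lambda-1}dz$, yielding the catenoid ($\lambda=1$) or a catenoid cousin ($\lambda\neq 1$). If $p=0$ then $g=e_0 z^n/(c_0+c_n z^n)$; inverting the initial rotation returns $\tilde g=z^n+a'$, and the remaining freedoms (post-rotation $g\mapsto e^{i\psi}g$, biholomorphism $z\mapsto kz$, and associate $\eta\mapsto e^{i\theta}\eta$) can be used to normalize $a'$ and the $\eta$-coefficient to positive reals $a,b$, producing exactly the Weierstrass data~\eqref{4.1}. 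Mutual non-isometry of the three families follows by comparing the spherical distance between the values $g(0)$ and $g(\infty)$ taken by the Gauss map at the ends, a well-defined invariant modulo $PSU(2)$: these two values are antipodal for the catenoid and for every catenoid cousin, but lie at strictly smaller distance for the new family; within each family, the exponent $\mu$ or $n$ is recovered as the cone angle of $(-K)g_M$ at the ends divided by $2\pi$, and the remaining parameters are not absorbed by the automorphisms $z\mapsto kz,\,k/z$ of $M$.
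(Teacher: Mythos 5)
Your argument is correct and shares its skeleton with the paper's first proof: the Hopf differential is pinned down as $\nu z^{-2}dz\otimes dz$, the Gauss map is reduced to $g=z^{\lambda}P/R$ with $P,R$ coprime polynomials, and everything hinges on the fact that $(\lambda P+zP')R-zPR'$ is a nonzero constant --- which is precisely the paper's identity (\ref{4.5}), reached there via the compatibility condition $\eta=\lambda z^{-\mu-1}\psi^{2}dz$ rather than via your (equivalent) ``no critical points, only simple poles'' reading of $\eta=Q/dg$. Where you genuinely diverge is in how this identity is solved: the paper differentiates (\ref{4.5}) and uses coprimality plus a degree count to get $z\varphi''+(1+\mu)\varphi'=0$ and $z\psi''+(1-\mu)\psi'=0$, then reads off $\varphi=a+bz^{-\mu}$, $\psi=cz^{\mu}+d$; you instead run a coefficient descent, and I checked that it works --- with the caveat that the summand $(m-2l)e_{p-l}c_{q-m+l}$ is not itself antisymmetric under $l\leftrightarrow m-l$; the antisymmetry only appears after substituting the induction hypothesis $c_{q-k}=(e_{p-k}/e_{p})c_{q}$, which turns the tail of the sum into $(m-2l)e_{p-l}e_{p-m+l}$, and then $A_{m}=0$ gives $c_{q-m}=(e_{p-m}/e_{p})c_{q}$ and finally $c_{0}=0$ at $m=q$ when $p\ge1$. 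That should be said explicitly, but the descent is sound and reaches the same trichotomy as the paper. Your non-isometry argument is also a different (and in one respect stronger) route: the $PSU(2)$-invariant spherical distance between the end-values of $g$ cleanly separates the family (\ref{4.1}) from catenoids and catenoid cousins, including an integer-exponent cousin versus a member of (\ref{4.1}) with the same total curvature, a comparison the paper's total-curvature argument does not address.

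The one place where you assert rather than prove is the within-family step, ``the remaining parameters are not absorbed by $z\mapsto kz,\ k/z$.'' This needs the short computation, because the inversion case is not vacuous: $\sigma(z)=(1+a^{2})^{1/n}/z$ is an isometry of the surface (\ref{4.1}) onto itself swapping the two ends, its Gauss map transforming by the rotation $w\mapsto(aw+1)/(w-a)\in PSU(2)$. Carrying out the computation (affine-in-$z^{n}$ Gauss maps can only be matched by diagonal or such involutive rotations, and matching the metrics then forces $a_{1}=a_{2}$, $b_{1}=b_{2}$; similarly $|k|=1$ and equal coefficients for the cousins) does close the step, so this is a fillable omission rather than a flaw --- and, for what it is worth, the paper's own computation (\ref{4.15}) is equally incomplete there, since it omits both the possible $PSU(2)$ rotation and the end-swapping lift.
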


\begin{proof}
On the one hand, since $Deck(\tilde{M}/M)\simeq\pi_1(M)\simeq\mathbb{Z}$ is a cyclic group, the proof of lemma \ref{lem3.1} tells us that $h(z):=z^{-\theta}g$ is a meromorphic function on $\bar{\mathbb{C}}$ for some  $\theta\in[0,1)$. Therefore, we may assume that the Gauss map is $g(z)=z^{\mu}f(z)$, where $f$ is a meromorphic function on $\bar{\mathbb{C}}$ and $\mu>0$. Hence it must have the form 
\begin{align}\label{4.2}
g(z)=z^{\mu}\frac{\varphi(z)}{\psi(z)},
\end{align}
where $\varphi$ and $\psi$ are coprime polynomials such that $\psi(0)\neq0$ and $\varphi(0)\neq0$. From lemma \ref{lem3.1} and the fact that $\eta$ can only have a zero of order $2m$ where $g$ has a pole of order $m$, we obtain immediately
\begin{align}\label{4.3}
\eta=\lambda z^{-\mu-1}\psi(z)^2 dz,
\end{align}
where $\lambda\in\mathbb{C}\backslash\{0\}$ is a constant number. On the other hand, as we have explained in the previous section, the Hopf differential $Q=dg\otimes\eta$ is meromorphic on $\bar{\mathbb{C}}$ satisfying $\textrm{ord}\ Q=-2$ at $0$ and $\infty$, thus it can be written as
\begin{align}\label{4.4}
Q=\frac{\nu}{z^2}dz\otimes dz,
\end{align}
where $\nu \in\mathbb{C}\backslash\{0\}$ is a constant number.

Taking (\ref{4.2}), (\ref{4.3}) and (\ref{4.4}) into consideration, we have
\begin{align*}
Q&=dg\otimes \eta\\
&=\lambda z^{-2}[\mu\varphi\psi+z(\varphi'\psi-\varphi\psi')]dz\otimes dz\\
&=\nu z^{-2}dz\otimes dz.
\end{align*}
Comparing these two expressions, we draw the following equation
\begin{align}\label{4.5}
\mu\varphi\psi+z(\varphi'\psi-\varphi\psi')=\frac{\nu}{\lambda}.
\end{align}
Differentiating the two sides of (\ref{4.5}), it becomes
\begin{align*}
\psi[z\varphi''+(1+\mu)\varphi']=\varphi[z\psi''+(1-\mu)\psi'].
\end{align*}
Since $\varphi$ and $\psi$ are coprime, there must be a polynomial $h(z)$ such that
\begin{align}\label{4.6}
z\varphi''+(1+\mu)\varphi'=h(z)\varphi.
\end{align}
The degree of the left-hand side of (\ref{4.6}) is at most $\textrm{deg}\ \varphi-1$, while the degree of the right-hand side should be at least $\textrm{deg}\ \varphi$ unless $h(z)=0$. This comparison tells us that $h(z)=0$. Therefore, (\ref{4.6}) is actually 
\begin{align}\label{4.7}
z\varphi''+(1+\mu)\varphi'=0.
\end{align}
By solving differential equation (\ref{4.7}), we get
\begin{align}\label{4.8}
\varphi=a+bz^{-\mu},
\end{align}
where $a,b\in\mathbb{C}$ are constant numbers.

Similarly, we can get
\begin{align}\label{4.9}
\psi=cz^{\mu}+d,
\end{align}
where $c,d\in\mathbb{C}$ are constant numbers.

Since $\mu>0$ and $\varphi$ is a polynomial, we must have $b=0$.

\quad(i). If $\mu\notin\mathbb{N}$, then $c=0$ because $\psi$ should also be a polynomial. Thus the Weierstrass data of the Ricci surface are
\begin{align*}
g=\frac{a}{d}z^{\mu},\quad\quad \eta=\lambda d^2z^{-\mu-1}dz.
\end{align*}
With a change of variables $u=(\frac{a}{d})^{\frac{1}{\mu}}z$, these expressions can be simplified to be
\begin{align*}
g=u^{\mu},\quad\quad \eta=\lambda adu^{-\mu-1}du.
\end{align*}
Moreover, by applying an $\mathbb{S}^1$-action if necessary, we may suppose $\lambda ad$ to be a real number $\rho>0$, thus the Weierstrass data of the Ricci surface can be written as 
\begin{align}\label{4.10}
g=u^{\mu},\quad\quad \eta=\rho u^{-\mu-1}du
\end{align}
with $\rho\in \mathbb{R}^{*}_{+}$.

\quad(ii). If $\mu\in\mathbb{N}$ and $c=0$, then the Weierstrass data have exactly the same form as in (\ref{4.10}). In these two cases, the Ricci surface $M$ is isometric to a catenoid if $\mu=1$, to a catenoid cousin if $\mu\neq 1$.

\quad(iii). If $\mu\in\mathbb{N}$ and $c\neq 0$, the Weierstrass data will have the form
\begin{align}\label{4.11}
g=\frac{az^{\mu}}{cz^{\mu}+d},\quad\quad \eta=\lambda(cz^{\mu}+d)^2z^{-\mu-1}dz.
\end{align}
Denoting
\begin{align*}
r=\frac{a}{d}\left(\frac{|c|^2}{|a|^2}+1\right), \quad s=\frac{\bar{c}}{\bar{a}},\quad t=\frac{\lambda d^2}{\frac{|c|^2}{|a|^2}+1},
\end{align*}
then a simple computation shows that after a rotation, the Weierstrass data of the Ricci surface become
\begin{align}\label{4.12}
\tilde{g}=rz^{\mu}+s,\quad\quad \tilde{\eta}=tz^{-\mu-1}dz,
\end{align}
where $r,s,t\in\mathbb{C}\backslash\{0\}$. With the help of a change of variables $w=r^{-\frac{1}{\mu}}z$, (\ref{4.12}) is simplified as
\begin{align}\label{4.13}
\tilde{g}=w^{\mu}+s,\quad\quad \tilde{\eta}=trw^{-\mu-1}dw.
\end{align}
Applying an $\mathbb{S}^1$-action if necessary, we may assume $tr$ to be a positive real number $\rho$, hence the Weierstrass data can be written as
\begin{align}\label{4.14}
\tilde{g}=w^{\mu}+s,\quad\quad \tilde{\eta}=\rho w^{-\mu-1}dw,
\end{align}
where $s\in \mathbb{C}\backslash\{0\}$ and $\rho\in \mathbb{R}^{*}_{+}$. Moreover, if we denote $s=\xi e^{i\theta}$ with $\xi\in \mathbb{R}^{*}_{+}$ and $\theta\in\mathbb{R}/2\pi\mathbb{Z}$, then it will be immediate by a rotation and a change of variables that the formula (\ref{4.1}) is obtained. 

From Ricci's theorem \ref{thm 2.1}, we know that a Ricci metric $g_M$ induces natually a metric $(-K)g_M$ which is of constant Gauss curvature 1, possibly with conical singularities. Therefore, we are able to compute the total curvature with the help of Gauss-Bonnet formula with conical singularities
\begin{align}\label{4.22}
\frac{1}{2\pi}\int_{M}(-K)dA=\chi(\bar{M})+\sum_{i=1}^{n}\beta_i,
\end{align}
where $\beta_i>-1$ are the orders of conical singularities (see \citep{troyanov1989metrics}). Taking advantage of the two expressions (\ref{4.10}) and (\ref{4.1}), a direct computation shows that the metric $(-K)g_M$ has two conical singularities of the same order at $0$ and $\infty$, which is $\mu-1$ for (\ref{4.10}) and $n-1$ for (\ref{4.1}). Hence the total curvature is $-4\pi\mu$ for (\ref{4.10}) and $-4\pi n$ for (\ref{4.1}). Since the total curvature is intrinsic, any two of such Ricci surfaces with different total curvatures cannot be isometric.  

Now we are going to prove that two Ricci surfaces with the same total curvature $-4\pi n$ but different pair $(a,b)$ in (\ref{4.1}) could not be isometric. Assuming that there is an isometry $\tau:M_1\rightarrow M_2$, where $M_1, M_2\simeq\mathbb{C}\backslash\{0\}$ are endowed with metrics  defined as in (\ref{4.1}) with $(a_1,b_1)\neq(a_2,b_2)$. Then $\tau$ can be lifted to a biholomorphic map $\tilde{\tau}:\mathbb{C}\rightarrow\mathbb{C}$ through the universal covering map $\textrm{exp}:\mathbb{C}\rightarrow\mathbb{C}\backslash\{0\}$, hence $\tilde{\tau}$ is actually an affine function. More precisely, there exists a pair $(\alpha,\beta)\in \mathbb{C}^2$ such that 
\begin{equation}\label{4.15}
\left\{
\begin{aligned}
&~g=e^{n(\alpha u+\beta)}+a_1=e^{nu}+a_2,\\
&~\eta=b_1\alpha e^{-n(\alpha u+\beta)}du=b_2e^{-nu}du.\\
\end{aligned}
\right.
\end{equation}
By comparing both sides of (\ref{4.15}), we get $\alpha=1$, $a_1=a_2$, $e^{n\beta}=1$ and $b_1=b_2$, contradiction. This completes the proof of the theorem.
\end{proof}

The previous proof is inspired by Umehara and Yamada (see \citep{umehara1993complete}). We give here another proof, which provides a sight of the method that we will use for our further discussion.

\begin{proof}
For the same reason, we suppose that the Weierstrass data have the forms as in (\ref{4.2}) and (\ref{4.3}).

(1). If $\psi$ is holomorphic at $\infty$, then it should be a constant function, thus $\eta=az^{-\mu-1}dz$ with $a\in\mathbb{C}$. This means that $f$ does not have poles on $\mathbb{C}\setminus\lbrace0\rbrace$. In fact, if $f$ has a pole on $\mathbb{C}\setminus\lbrace0\rbrace$, then $\eta$ should have zero at this point, thus $a=0$ and $\eta\equiv0$. This leads to the result $Q\equiv0$, which is a contradiction to the condition of a catenoidal end. Therefore, $f$ is holomorphic on $\mathbb{C}$.

\quad(i). If $f$ is also holomorphic at $\infty$, then $f$ must be a constant function, hence $g=bz^{\mu}$ with $b\in\mathbb{C}$. In this case, we get a catenoid or a catenoid cousin.

\quad(ii). If $f$ has poles at $\infty$, it should be a polynomial 
\begin{align}\label{4.16}
f(z)=\nu(z-a_1)...(z-a_p)
\end{align}
with $\nu, a_i\in\mathbb{C}\setminus\lbrace0\rbrace$ and $p\in\mathbb{N}^*$. Let us make a substitution $z=\frac{1}{w}$, then we get 
\begin{equation*}
\left\{
\begin{aligned}
&~g=\nu w^{-\mu-p}(1-a_1w)...(1-a_pw),\\
&~\eta=-aw^{\mu-1}.\\
\end{aligned}
\right.
\end{equation*}
The condition of catenoidal end at $\infty$ becomes
\begin{align*}
\textrm{ord}\ Q=-p-2=-2,
\end{align*}
which results in $p=0$, contradiction.

(2). If $\psi$ has poles at $\infty$, it must have the form
\begin{align}\label{4.17}
\psi(z)=(z-b_1)...(z-b_q)
\end{align}
with $b_i\in\mathbb{C}\setminus\lbrace0\rbrace$ and $q\in\mathbb{N}^*$. As a consequence, $f$ has the expression
\begin{align}\label{4.18}
f(z)=\frac{\xi(z-c_1)...(z-c_r)}{(z-b_1)...(z-b_q)},
\end{align}
where $r\in\mathbb{N}$ and $\xi,c_j\in\mathbb{C}\setminus\lbrace0\rbrace$ satisfying $b_i\neq c_j$ for all $(i,j)$. Replacing $z$ by $\frac{1}{w}$, we obtain
\begin{equation*}
\left\{
\begin{aligned}
&~g=\xi w^{-\mu-r+q}\frac{(1-c_1w)...(1-c_rw)}{(1-b_1w)...(1-b_qw)},\\
&~\eta=-\lambda w^{\mu-2q-1}(1-b_1w)^2...(1-b_qw)^2.\\
\end{aligned}
\right.
\end{equation*}

\quad(i). If $-\mu-r+q\neq0$, then the condition of catenoidal end at $\infty$ implies $-r-q-2=-2$, thus $r+q=0$, which is not possible.

\quad(ii). The only case left is $\mu=q-r\in\mathbb{N}^*$. Applying $\tilde{g}=\frac{g-\xi}{\bar{\xi}g+1}$, we get the following formula
\begin{align}\label{4.19}
\tilde{g}=\xi\frac{(1-c_1w)...(1-c_rw)-(1-b_1w)...(1-b_qw)}{\xi\bar{\xi}(1-c_1w)...(1-c_rw)+(1-b_1w)...(1-b_qw)}.
\end{align}
On the one side, it can be seen that $\textrm{ord}\ \tilde{g}\leq q$ because $q>r$. On the other side, since $\textrm{ord}\ \tilde{\eta}=\textrm{ord}\ \eta=\mu-2q-1=-q-r-1$, we can compute
\begin{align*}
\textrm{ord}\ d\tilde{g}=-2-\textrm{ord}\ \tilde{\eta}=q+r-1\geq0, 
\end{align*}
hence $\textrm{ord}\ \tilde{g}=q+r$. We deduce from this comparison that $r=0$. Combining (\ref{4.19}) with the fact that $\textrm{ord}\ \tilde{g}=q$, we may observe that 
  \begin{equation*}
\left\{
\begin{aligned}
&~I_1:=\sum_{1\leq i\leq n}c_i=0,\\
&~I_2:=\sum_{1\leq i<j\leq n}c_ic_j=0,\\
&~...\\
&~I_{n-1}:=\sum_{1\leq i_1<i_2<...<i_{n-1}\leq n}c_{i_1}...c_{i_{n-1}}=0,\\
&~I_n:=c_1c_2...c_n\neq0.
\end{aligned}
\right.
\end{equation*}
Consequently, the Weierstrass data take the same form as in (\ref{4.11}), hence the conclusion can be drawn by utilizing the same argument as in the first proof. 
\end{proof}

From the proofs of this theorem, we may easily obtain the following conclusion.
\begin{corollary}\label{cor4.1}
A complete non-positively curved Ricci surface $M\simeq\mathbb{C}\backslash\{0\}$ whose total curvature is $-4\pi\mu$ with $\mu\in\mathbb{R}^{*}_{+}\setminus\mathbb{N}$ must be isometric to a catenoid cousin.
\end{corollary}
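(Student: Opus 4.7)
The plan is a direct deduction from Theorem \ref{thm4.1}, exploiting the fact that total curvature is an isometric invariant.

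First I would note that the corollary inherits the setting of Theorem \ref{thm4.1}, in particular that $M$ carries two catenoidal ends (at $0$ and at $\infty$). Under this hypothesis, Theorem \ref{thm4.1} asserts that, up to isometry, $(M, g_M)$ falls into one of three families: a catenoid, a catenoid cousin with Weierstrass data of the form (\ref{4.10}), or a surface with Weierstrass data (\ref{4.1}) for some $n\in\mathbb{N}^*$.

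Next I would invoke the total curvature computations carried out inside the proof of Theorem \ref{thm4.1} via the Gauss-Bonnet formula with conical singularities (\ref{4.22}). The surfaces with data (\ref{4.10}) have total curvature $-4\pi\nu$ for their parameter $\nu \in \mathbb{R}^*_+$ (the catenoid being the sub-case $\nu = 1$), whereas the surfaces with data (\ref{4.1}) always have total curvature $-4\pi n$ with $n\in\mathbb{N}^*$, which is a negative integer multiple of $4\pi$.

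Finally, the hypothesis $\mu\in\mathbb{R}^*_+\setminus\mathbb{N}$ immediately excludes both the catenoid (its total curvature $-4\pi$ would force $\mu=1\in\mathbb{N}$) and the family (\ref{4.1}) (its total curvature is an integer multiple of $-4\pi$). The only surviving possibility is a catenoid cousin, whose parameter is then forced to equal $\mu$ by matching total curvatures. No genuine obstacle arises; all the analytic work was completed in Theorem \ref{thm4.1}, and this corollary is essentially a reading of that classification through the lens of the total curvature.
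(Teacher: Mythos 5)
Your proposal is correct and is essentially the paper's own argument: the paper offers no separate proof, deriving the corollary by reading the classification of Theorem \ref{thm4.1} together with the Gauss--Bonnet computation that gives total curvature $-4\pi\mu$ for the data (\ref{4.10}) and $-4\pi n$, $n\in\mathbb{N}^*$, for the data (\ref{4.1}), exactly as you do. Your explicit remark that the two-catenoidal-ends hypothesis of the section is retained is the right reading, since the paper's derivation tacitly uses it as well.
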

 
\begin{remark}\label{rem4.1}
Umehara and Yamada have proven in \cite{umehara1993complete} a similar result for complete CMC-1 surfaces in the hyperbolic 3-space $\mathcal{H}^3$. Then by the Lawson correspondence, these surfaces correspond to Ricci surfaces. It is worthwhile to mention that theorem \ref{thm4.1} gives actually more surfaces than theirs. In fact, the main difference between our theorem and the result of Umehara and Yamada is the catenoid case and that they have for the case (\ref{4.1}) a supplementary condition 
\begin{align}\label{4.20}
n^2+4bn=m^2
\end{align}
with $m\in \mathbb{N}^{*}$. Condition (\ref{4.20}) implies that $b=\frac{m^2-n^2}{4n}\in \mathbb{Q}$, but we just require $b$ to be a real number. Then the previous argument says that we have indeed more surfaces. Moreover, up to applying a homothety, our result implies that every complete non-positively curved Ricci surface $M\simeq\mathbb{C}\backslash\{0\}$ with two catenoidal ends admits an isometric CMC-1 immersion in $\mathcal{H}^3$.
\end{remark}

\begin{remark}\label{rem4.2}
Troyanov has classified all the metrics of constant Gauss curvature 1 on $\bar{\mathbb{C}}$ with two conical singularities at $0$ and $\infty$ (see \citep{troyanov1989metrics}). In fact, he affirms that if $d\sigma^2$ is a such metric, then there exists $\lambda\geq0$ and $\beta>-1$, such that $d\sigma^2$ has the expression
\begin{align}\label{4.21}
d\sigma^2=(2+2\beta )^2\frac{|z|^{2\beta}|dz|^2}{(|1+\lambda z^{\beta+1}|^2+|z|^{2\beta+2})^2},
\end{align}
where $\beta$ is either an integer, or $\lambda=0$. In theorem \ref{thm4.1}, the condition for catenoidal ends at $0$ and $\infty$ implies that we should have a flat metric $|Q|=\nu\frac{|dz|^2}{|z|^2}$ with $\nu\in\mathbb{R}^{*}_{+}$. Let us fix this flat metric, then each metric $d\sigma^2$ in Troyanov's theorem gives rise to a desired Ricci surface for us (see \citep{moroianu2015ricci}, Lemma 2.3). Conversely, for every Ricci surface in theorem \ref{thm4.1}, since the induced flat metric $\sqrt{-K}g_M$ coincides with $|Q|$ which does not vanish on $\mathbb{C}\setminus\lbrace0\rbrace$, its Gauss curvature $K$ has no zeroes on $\mathbb{C}\setminus\lbrace0\rbrace$. Hence the metric $(-K)g_M$ is of constant Gauss curvature 1 with two conical singularities at $0$ and $\infty$. It is a direct verification that $(-K)g_M$ has the same expression as in (\ref{4.21}). Therefore, our result and Troyanov's theorem are equivalent.
\end{remark}

\section{$M\simeq\mathbb{C}\setminus\lbrace0,1\rbrace$ with three catenoidal ends}

After giving a complete classification for non-positively curved Ricci surface $M\simeq\mathbb{C}\setminus\lbrace0\rbrace$ with two catenoidal ends, we are naturally interested in studying the case when the Ricci surface is topologically $M\simeq\mathbb{C}\setminus\lbrace0,1\rbrace$ with three catenoidal ends at $0,1,\infty$. However, since $\pi_1(M)\simeq\mathbb{Z}*\mathbb{Z}$ is not abelian, we have to pay attention to the monodromy representation $\rho: \pi_1(M)\rightarrow PSU(2)$, as mentionned in the end of Section 4.

We will only consider the case when the induced metric $(-K)g_M$ is reducible. Let us denote $\tau_1,\tau_2$ the two generators of $\pi_1(M)\simeq\mathbb{Z}*\mathbb{Z}$ which are represented by two closed curves $\gamma_1, \gamma_2$ surrounding $0$ and $1$, respectively. On the one hand, since the image of $\rho$ is abelian, we may choose a proper $g$ such that 
\begin{align*}
g\circ{\tau_j}^{-1}=e^{2\pi i\theta_j}g,\quad j=1,2,
\end{align*}
where $\theta_j\in[0,1)$. Imitating the proof of lemma \ref{lem3.1}, one can verify that the function $h(z):=(z-1)^{-\theta_2}z^{-\theta_1}g$ is meromorphic on $\bar{\mathbb{C}}$, thus a rational function. Therefore, $g$ should be written as 
\begin{align*}
g=(z-1)^{\theta_2+p}z^{\theta_1+q}\frac{\varphi(z)}{\psi(z)},
\end{align*}
where $p,q\in\mathbb{Z}$ and $\varphi,\psi$ are coprime polynomials without zeroes at $0,1$. By interchanging the roles of $0,1$ and applying a rotation if necessary, we may devide the Weierstrass data into two possible situations:\\
Case 1:
  \begin{equation}\label{6.1}
\left\{
\begin{aligned}
&~g=\lambda (z-1)^{\mu} z^\nu \frac{\varphi(z)}{\psi(z)},\\
&~\eta=\kappa (z-1)^{-\mu -1}z^{-\nu -1} \psi^2(z)dz,\\
\end{aligned}
\right.
\end{equation}
where $\lambda, \kappa\in\mathbb{C}\setminus\lbrace0\rbrace$, $\mu>0$, $\nu \in\mathbb{R}^*$ and $\varphi,\psi$ are coprime polynomials without zeroes at $0,1$. \\
Case 2:
 \begin{equation}\label{6.2}
\left\{
\begin{aligned}
&~g=\lambda (z-1)^{\mu} \frac{\varphi(z)}{\psi(z)},\\
&~\eta=\kappa (z-1)^{-\mu -1}z^{-\nu -1} \psi^2(z)dz,\\
\end{aligned}
\right.
\end{equation}
where $\lambda, \kappa\in\mathbb{C}\setminus\lbrace0\rbrace$, $\mu>0$, $\nu \in\mathbb{N}^*$ and $\varphi,\psi$ are coprime polynomials without zeroes at $0,1$.

In fact, if $\theta_2+p\neq0$, then up to replacing $g$ by $\frac{1}{g}$, we will obtain (\ref{6.1}) or (\ref{6.2}). In the case when $\theta_2+p=0$ but $\theta_1+q\neq0$, we may apply a Mobius transformation $\sigma(z)=1-z$ to interchange the role of $0$ and $1$, then we are back to the first case. For the last case, if both $\theta_2+p$ and $\theta_1+q$ are zero, then we may use a rotation to get a new $g$ which has zeroes at $1$. Since such a rotation does not change the orders of $dg$ and $\eta$ at $1$, we will find (\ref{6.1}) or (\ref{6.2}) again. 

On the other hand, we know from Section 4 that the Hopf differential $Q$ is holomorphic on $M$ with three poles of order $2$ at $0,1$ and $\infty$, thus it can be determined to be 
\begin{align}\label{6.3}
Q=\alpha\frac{(z-a_1)(z-a_2)}{z^2(z-1)^2}dz\otimes dz,
\end{align}
where $a_1,a_2\in\mathbb{C}\setminus\lbrace0,1\rbrace$ and $\alpha\in\mathbb{C}\setminus\lbrace0\rbrace$. Combining (\ref{6.1}), (\ref{6.2}) with the definition of Hopf differential $Q=dg\otimes\eta$, we can see that no matter which case shall we take, $dg$ actually has a uniform expression 
\begin{align}\label{6.4}
dg=\frac{\alpha}{\kappa}(z-1)^{\mu -1}z^{\nu -1}\frac{(z-a_1)(z-a_2)}{\psi^2(z)}dz.
\end{align}
Therefore, we obtain two differential equations with respect to the two cases:\\
For case 1, we have 
\begin{align}\label{6.5}
[(\mu+\nu)z-\nu]\varphi\psi + z(z-1)(\varphi'\psi-\varphi\psi')=\frac{\alpha}{\lambda\kappa}(z-a_1)(z-a_2).
\end{align}
For case 2, the equation is
\begin{align}\label{6.6}
\mu\varphi\psi + (z-1)(\varphi'\psi-\varphi\psi')=\frac{\alpha}{\lambda\kappa}z^{\nu-1}(z-a_1)(z-a_2).
\end{align}
Any solution $\varphi,\psi$ of (\ref{6.5}) or (\ref{6.6}) will give rise to a desired Ricci surface. However, these two equations are very complicated to solve directly, so we need to do a more general discussion.
   
(1). Suppose that $a_1\neq a_2$. If $\psi$ has a zero $c$ of order higher than $1$, then by analysing (\ref{6.5}) and (\ref{6.6}), we have $(z-c)|(z-a_1)(z-a_2)$, hence $c=a_1$ or $c=a_2$ and the order of $(z-c)$ is exactly $2$.

\quad (i). If $\psi(z)=(z-a_1)^2(z-a_2)^2\widehat{\psi}$, then $\widehat{\psi}$ should be a polynomial whose zeroes are all distinct. Let us denote $\widehat{\psi}(z)=\prod_{j=1}^{n}(z-b_j)$. The existence of $g$ requires that all the residues at poles of $dg$ (zeroes of $\psi$) to be zero, whence we get the following conditions:
\begin{align}\label{6.7}
\frac{\mu-1}{b_i-1}+\frac{\nu-1}{b_i}-\frac{3}{b_i-a_1}-\frac{3}{b_i-a_2}-\sum_{j\neq i}\frac{2}{b_i-b_j}=0,\quad \forall i\in\lbrace 1,2,...,n\rbrace,
\end{align}
\begin{align}\label{6.8}
\notag &\frac{(\mu-1)(\mu-2)}{(a_1-1)^2}+\frac{2(\mu-1)(\nu-1)}{a_1(a_1-1)}+\frac{(\nu-1)(\nu-2)}{a_1^2}+\frac{12}{(a_1-a_2)^2}\\
-&\left(\frac{\mu-1}{a_1-1}+\frac{\nu-1}{a_1}\right)\left(\frac{6}{a_1-a_2}+\sum_{j=1}^{n}\frac{4}{a_1-b_j}\right)+\frac{6}{a_1-a_2}\left(\sum_{j=1}^{n}\frac{2}{a_1-b_j}\right)\\
\notag +&\left(\sum_{j=1}^{n}\frac{2}{a_1-b_j}\right)^2+\sum_{j=1}^{n}\frac{2}{(a_1-b_j)^2}=0,
\end{align}
and
\begin{align}\label{6.9}
\notag &\frac{(\mu-1)(\mu-2)}{(a_2-1)^2}+\frac{2(\mu-1)(\nu-1)}{a_2(a_2-1)}+\frac{(\nu-1)(\nu-2)}{a_2^2}+\frac{12}{(a_2-a_1)^2}\\
 -&\left(\frac{\mu-1}{a_2-1}+\frac{\nu-1}{a_2}\right)\left(\frac{6}{a_2-a_1}+\sum_{j=1}^{n}\frac{4}{a_2-b_j}\right)+\frac{6}{a_2-a_1}\left(\sum_{j=1}^{n}\frac{2}{a_2-b_j}\right)\\
\notag +&\left(\sum_{j=1}^{n}\frac{2}{a_2-b_j}\right)^2+\sum_{j=1}^{n}\frac{2}{(a_2-b_j)^2}=0.
\end{align}

\quad (ii). Let us consider the case when $\psi(z)=(z-a_1)^2\widehat{\psi}$, where $\widehat{\psi}$ is the same as in case (i). A similar computation yields that
\begin{align}\label{6.10}
\frac{\mu-1}{b_i-1}+\frac{\nu-1}{b_i}-\frac{3}{b_i-a_1}+\frac{1}{b_i-a_2}-\sum_{j\neq i}\frac{2}{b_i-b_j}=0,\quad \forall i\in\lbrace 1,2,...,n\rbrace,
\end{align}
and
\begin{align}\label{6.11}
\notag &\frac{(\mu-1)(\mu-2)}{(a_1-1)^2}+\frac{2(\mu-1)(\nu-1)}{a_1(a_1-1)}+\frac{(\nu-1)(\nu-2)}{a_1^2}+\frac{2(\nu-1)}{a_1(a_1-a_2)}\\
+&\frac{2(\mu-1)}{(a_1-1)(a_1-a_2)}
-\left(\sum_{j=1}^{n}\frac{4}{a_1-b_j}\right)\left(\frac{\mu-1}{a_1-1}+\frac{\nu-1}{a_1}+\frac{1}{a_1-a_2}\right)\\
\notag +&\left(\sum_{j=1}^{n}\frac{2}{a_1-b_j}\right)^2+\sum_{j=1}^{n}\frac{2}{(a_1-b_j)^2}=0,
\end{align}
should be satisfied.

\quad (iii). The last case is when all the zeroes of $\psi$ are distinct. We will keep the notation as in the two previous cases, then we must have 
\begin{align}\label{6.12}
\frac{\mu-1}{b_i-1}+\frac{\nu-1}{b_i}+\frac{1}{b_i-a_1}+\frac{1}{b_i-a_2}-\sum_{j\neq i}\frac{2}{b_i-b_j}=0,\quad \forall i\in\lbrace 1,2,...,n\rbrace.
\end{align}

(2). Now let us assume that $a_1=a_2$. In this case, if $\psi$ has a zero with multiplicity, then it could be $(z-a_1)^2$ or $(z-a_1)^3$.

\quad (i). For the case when $\psi$ does not have zeroes with multiplicity, we easily get
\begin{align}\label{6.13}
\frac{\mu-1}{b_i-1}+\frac{\nu-1}{b_i}+\frac{2}{b_i-a_1}-\sum_{j\neq i}\frac{2}{b_i-b_j}=0,\quad \forall i\in\lbrace 1,2,...,n\rbrace.
\end{align} 

\quad (ii). If $\psi(z)=(z-a_1)^2\widehat{\psi}$, then the conditions are
\begin{align}\label{6.14}
\frac{\mu-1}{b_i-1}+\frac{\nu-1}{b_i}-\frac{2}{b_i-a_1}-\sum_{j\neq i}\frac{2}{b_i-b_j}=0,\quad \forall i\in\lbrace 1,2,...,n\rbrace,
\end{align}
and
\begin{align}\label{6.15}
\frac{\mu-1}{a_1-1}+\frac{\nu-1}{a_1}-\sum_{j=1}^{n}\frac{2}{a_1-b_j}=0,\quad \forall i\in\lbrace 1,2,...,n\rbrace.
\end{align}

\quad (iii). The only case left is when $\psi(z)=(z-a_1)^3\widehat{\psi}$. Similarly, the conditions can be computed to be 
\begin{align}\label{6.16}
\frac{\mu-1}{b_i-1}+\frac{\nu-1}{b_i}-\frac{4}{b_i-a_1}-\sum_{j\neq i}\frac{2}{b_i-b_j}=0,\quad \forall i\in\lbrace 1,2,...,n\rbrace,
\end{align} 
and 
\begin{align}\label{6.17}
\notag &2\left(\frac{\mu-1}{a_1-1}+\frac{\nu-1}{a_1}\right)\left[11(\sum_{j=1}^{n}\frac{1}{a_1-b_j})^2+5\sum_{i\neq j}\frac{1}{(a_1-b_i)(a_1-b_j)}\right]\\
\notag +&3\left[\frac{(\mu-1)(\mu-2)}{(a_1-1)^2}+\frac{2(\mu-1)(\nu-1)}{a_1(a_1-1)}+\frac{(\nu-1)(\nu-2)}{a_1^2}\right]\left(\sum_{j=1}^{n}\frac{2}{a_1-b_j}\right)\\
+&\frac{(\mu-1)(\mu-2)(\mu-3)}{(a_1-1)^3}+\frac{3(\mu-1)(\nu-1)(\mu-2)}{a_1(a_1-1)^2}\\
\notag +&\frac{3(\mu-1)(\nu-1)(\nu-2)}{a_1^2(a_1-1)}+\frac{(\nu-1)(\nu-2)(\nu-3)}{a_1^3}\\
\notag -&\sum_{j=1}^{n}\frac{32}{(a_1-b_j)^3}-\sum_{i\neq j}\frac{60}{(a_1-b_i)^2(a_1-b_j)}\\
\notag -&\sum_{i\neq j\neq k}\frac{16}{(a_1-b_i)(a_1-b_j)(a_1-b_k)}=0.
\end{align}
If a Ricci surface induces a reducible metric $(-K)g_M$ of constant Gauss curvature 1, then it must satisfy equation (\ref{6.5}) or (\ref{6.6}) and one of the previous cases; the converse is also true. Therefore, so as to find all the desired Ricci surfaces, we may firstly solve algebraic equations (\ref{6.7})--(\ref{6.17}) of each case to completely determine the polynomial $\psi$. Then the problem is reduced to searching solutions to first order equations (\ref{6.5}) and (\ref{6.6}) with a known polynomial $\psi$. We summarize these arguments in the following theorem:

\begin{theorem}\label{thm6.1}
Given $\mu>0$, $\nu\in\mathbb{R}^*$ and $a_1,a_2\in\mathbb{C}\setminus\lbrace0,1\rbrace$, a complete non-positively curved Ricci surface $M\simeq\mathbb{C}\setminus\lbrace0,1\rbrace$ with a reducible metric $(-K)g_M$ and three catenoidal ends exists if only if one of the following cases is verified:

(i). If $a_1\neq a_2$, there exist $n\in\mathbb{N}$ and mutually distinct $b_1,...,b_n\in\mathbb{C}\setminus\lbrace0,1,a_1,a_2\rbrace$ such that (\ref{6.7})--(\ref{6.9}) are satisfied and equation (\ref{6.5}) or (\ref{6.6}) admits a polynomial solution $\varphi$ with $\psi=(z-a_1)^2(z-a_2)^2\prod_{j=1}^{n}(z-b_j)$;

(ii). If $a_1\neq a_2$, there exist $n\in\mathbb{N}$ and mutually distinct $b_1,...,b_n\in\mathbb{C}\setminus\lbrace0,1,a_1,a_2\rbrace$ such that (\ref{6.10})--(\ref{6.11}) are satisfied and equation (\ref{6.5}) or (\ref{6.6}) admits a polynomial solution $\varphi$ with $\psi=(z-a_1)^2\prod_{j=1}^{n}(z-b_j)$;

(iii). If $a_1\neq a_2$, there exist $n\in\mathbb{N}$ and mutually distinct $b_1,...,b_n\in\mathbb{C}\setminus\lbrace0,1,a_1,a_2\rbrace$ such that (\ref{6.12}) are satisfied and equation (\ref{6.5}) or (\ref{6.6}) admits a polynomial solution $\varphi$ with $\psi=\prod_{j=1}^{n}(z-b_j)$;

(iv). If $a_1=a_2$, there exist $n\in\mathbb{N}$ and mutually distinct $b_1,...,b_n\in\mathbb{C}\setminus\lbrace0,1,a_1\rbrace$ such that (\ref{6.13}) are satisfied and equation (\ref{6.5}) or (\ref{6.6}) admits a polynomial solution $\varphi$ with $\psi=\prod_{j=1}^{n}(z-b_j)$;

(v). If $a_1=a_2$, there exist $n\in\mathbb{N}$ and mutually distinct $b_1,...,b_n\in\mathbb{C}\setminus\lbrace0,1,a_1\rbrace$ such that (\ref{6.14})--(\ref{6.15}) are satisfied and equation (\ref{6.5}) or (\ref{6.6}) admits a polynomial solution $\varphi$ with $\psi=(z-a_1)^2\prod_{j=1}^{n}(z-b_j)$;

(vi). If $a_1=a_2$, there exist $n\in\mathbb{N}$ and mutually distinct $b_1,...,b_n\in\mathbb{C}\setminus\lbrace0,1,a_1\rbrace$ such that (\ref{6.16})--(\ref{6.17}) are satisfied and equation (\ref{6.5}) or (\ref{6.6}) admits a polynomial solution $\varphi$ with $\psi=(z-a_1)^3\prod_{j=1}^{n}(z-b_j)$.
\end{theorem}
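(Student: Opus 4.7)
The plan is to organize the case analysis preceding the statement into a clean biconditional. Almost all structural work for the ``only if'' direction is already present in the exposition; the remaining content is to ensure that the implications reverse and that the converse construction yields a bona fide complete Ricci surface.

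For the forward direction, I would start with a Ricci surface $M \simeq \mathbb{C} \setminus \{0,1\}$ satisfying the hypotheses. Since $(-K)g_M$ is reducible, the monodromy representation $\rho$ takes values in a maximal torus of $PSU(2)$, so in suitable normalization one has $g \circ \tau_j^{-1} = e^{2\pi i \theta_j} g$ for both generators $\tau_j$ of $\pi_1(M)$. Adapting the argument of Lemma~\ref{lem3.1} at each puncture, together with a possible exchange $z \mapsto 1-z$ and the inversion $g \mapsto 1/g$, forces the Weierstrass data into the form (\ref{6.1}) or (\ref{6.2}). The Hopf differential is globally defined on $M$ with second-order poles at the three catenoidal ends, hence equals (\ref{6.3}); then $Q = dg \otimes \eta$ gives (\ref{6.4}). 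The matching between the factor $\psi^2$ in the denominator of $dg$ and the numerator $(z-a_1)(z-a_2)$ forces the six structural possibilities on $\psi$ listed in (i)--(vi). Finally, the single-valuedness of $g$ itself, not merely of $dg$, requires every residue of $dg$ to vanish, and writing out these residues at each $b_i$ and at each $a_i$ where $dg$ has a pole of order $\geq 2$ yields precisely the algebraic equations (\ref{6.7})--(\ref{6.17}).

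For the converse direction, given $\mu, \nu, a_1, a_2$ and a polynomial $\psi$ satisfying the relevant conditions, together with a polynomial solution $\varphi$ of (\ref{6.5}) or (\ref{6.6}), define $g$ and $\eta$ by (\ref{6.1}) or (\ref{6.2}). The residue conditions guarantee that $g$ has only multiplicative monodromy around $0$ and $1$, and by construction the zeros of $\eta$ are of even order precisely at the poles of $g$, so $(g, \eta)$ constitutes valid Weierstrass data. Proposition~\ref{prop2.1} and Remark~\ref{rem2.1} then yield a Ricci metric on $M$; Lemma~\ref{lem3.1} confirms the three punctures are catenoidal ends; and completeness follows from the local form (\ref{3.4}), which blows up at order at least $|z|^{-2}$ at each end.

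The main obstacle is the heavy algebraic bookkeeping in the residue computations. In particular, condition (\ref{6.17}) in case (vi), where $\psi$ has a triple zero at $a_1$, requires vanishing of the residue at what is now a quadruple pole of $dg$ and produces a single highly nonlinear identity mixing cubic terms in $(\mu-1)$ and $(\nu-1)$ with symmetric products over the $b_j$. I would compute it by writing $dg$ as a rational function with explicit numerator and denominator, expanding each factor in the local parameter $z-a_1$ up to the required order, and collecting the coefficient of $(z-a_1)^{-1}$, rather than attempting to differentiate the closed form directly. A secondary subtlety worth verifying is that equations (\ref{6.5}) and (\ref{6.6}) admit polynomial solutions $\varphi$ at all; once $\psi$ is fixed this is a linear equation in $\varphi$, so existence reduces to a dimension count on polynomials of bounded degree, compatible with the degree constraints forced by the pole order of $Q$ at infinity.
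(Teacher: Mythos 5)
Your proposal is correct and follows essentially the same route as the paper, whose ``proof'' is precisely the discussion preceding the statement: reducibility gives simultaneous multiplicative monodromy $g\circ\tau_j^{-1}=e^{2\pi i\theta_j}g$, the Lemma \ref{lem3.1}-type analysis at the punctures forces the forms (\ref{6.1})--(\ref{6.2}), the Hopf differential is pinned down as (\ref{6.3}) giving (\ref{6.4}) and the equations (\ref{6.5})--(\ref{6.6}), and the case analysis on the multiplicities of the zeroes of $\psi$ together with vanishing of the residues of $dg$ yields (\ref{6.7})--(\ref{6.17}), with the converse obtained exactly as you describe via Proposition \ref{prop2.1}, Remark \ref{rem2.1} and the catenoidal-end criterion. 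Your added remarks (computing the higher-order residue conditions such as (\ref{6.17}) by Laurent expansion, and noting that solvability of (\ref{6.5})--(\ref{6.6}) in $\varphi$ is a linear problem) are consistent with, and slightly more explicit than, the paper's treatment.
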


\begin{remark}\label{rem6.1}
In the case when $M\simeq\mathbb{C}\setminus\lbrace0,1\rbrace$, we are not allowed to use the method mentioned in remark \ref{rem4.2} to study Ricci surfaces. This is because the flat metric $|Q|$ vanishes at $a_1, a_2\in\mathbb{C}\setminus\lbrace0,1\rbrace$, which should coincide with zeroes of the Gauss curvature $K$. It means that the hypothesis $V>0$ of Lemma 2.3 in \citep{moroianu2015ricci} fails to be satisfied. Therefore, we are obliged to take advantage of the method as above.
\end{remark}

Now we plan to use theorem \ref{thm6.1} to do some classification for Ricci surfaces with a reducible metric $(-K)g_M$ and with finite total curvature $-4\pi$. Denote $\textrm{deg}\ \psi=N$ and $\textrm{deg}\ \varphi=m$, then by using Gauss-Bonnet formula with conical singularities (\ref{4.22}) again, we are able to compute the total curvature. The expression of total curvature should be divided into several cases:

(1). Suppose that the Weierstrass data take the form as in (\ref{6.1}) with $\nu>0$. The induced metric $(-K)g_M$ can be determined to be 
\begin{align*}
(-K)g_M=\frac{4|\lambda\alpha|^2}{|\kappa|^2}\cdot\frac{|z-1|^{2(\mu-1)}|z|^{2(\nu-1)}|z-a_1|^2|z-a_2|^2}{(|\psi|^2+|\lambda|^2|z-1|^{2\mu}|z|^{2\nu}|\varphi|^2)^2}|dz|^2.
\end{align*} 

\quad(a). If $N>\mu+\nu+m$, then this metric has conical sinularities of order $\nu-1,\mu-1,2N-2-\mu-\nu,1,1$ at $0,1,\infty,a_1,a_2$, respectively. Hence the total curvature is $-4\pi N$. The hypothesis that the total curvature should be $-4\pi$ implies $N=1$, thus $m=0$ and $0<\mu+\nu<1$. In this case, we have $\varphi=1$ and $\psi(z)=z-b$ for some $b\in\mathbb{C}\setminus\lbrace0,1\rbrace$. Comparing two sides of equation (\ref{6.5}), we can see that if
\begin{align}\label{6.18}
\nu(\nu-1)=(\mu+\nu-1)[\nu(a_1+a_2)-(\mu+\nu)a_1a_2]
\end{align}
is verified, then $b=\frac{(\mu+\nu-1)a_1a_2}{\nu}$ is uniquely determined. Moreover, theorem \ref{thm6.1} tells us that $b$ should satisfy condition (\ref{6.12}) or (\ref{6.13}).  

\quad(b). If $N=\mu+\nu+m$ and $N+m\geq2$, then the total curvature is also $-4\pi N$. Hence we must have $N=1$ and $m=0$, which is not possible.

\quad(c). If $N<\mu+\nu+m$, then the total curvature is $-4\pi(\mu+\nu+m)$. The condition $\mu+\nu+m=1$ tells us that $m=0$ and $N=0$. In this case, $\psi=\varphi=1$ and that condition (\ref{6.12}) or (\ref{6.13}) is obviously satisfied. However, equation (\ref{6.5}) will not hold because the degrees on the two sides are not coincident. Hence this case cannot appear. 

(2). Now assume that the Weierstrass data also take the form as in (\ref{6.1}) but with $\nu<0$. In this case, we have 
\begin{align*}
(-K)g_M=\frac{4|\lambda\alpha|^2}{|\kappa|^2}\cdot\frac{|z-1|^{2(\mu-1)}|z|^{-2(\nu+1)}|z-a_1|^2|z-a_2|^2}{(|z|^{-2\nu}|\psi|^2+|\lambda|^2|z-1|^{2\mu}|\varphi|^2)^2}|dz|^2.
\end{align*} 

\quad(d). If $N>\mu+\nu+m$, then the total curvature is $-4\pi(N-\nu)$, thus we get $N=0$ and $\nu=-1$. Moreover, $1>\mu+m$ implies $m=0$. The same argument as in (c) shows that this is impossible.

\quad(e). If $N<\mu+\nu+m$, then the total curvature is $-4\pi(\mu+m)$. Again, we will obtain $m=0$ and $N=0$, not possible.

\quad(f). If $N=\mu+\nu+m$ and $N+m\geq2$, then the total curvature is also $-4\pi(N-v)$. This leads to $N=m=0$, impossible.

(3). Suppose now that the Weierstrass data have the expression as in (\ref{6.2}), where $\nu\in\mathbb{N}^*$. A similar computation shows 
\begin{align*}
(-K)g_M=\frac{4|\lambda\alpha|^2}{|\kappa|^2}\cdot\frac{|z-1|^{2(\mu-1)}|z|^{2(\nu-1)}|z-a_1|^2|z-a_2|^2}{(|\psi|^2+|\lambda|^2|z-1|^{2\mu}|\varphi|^2)^2}|dz|^2.
\end{align*} 

\quad(g). If $N>\mu+m$, the total curvature is $-4\pi N$, thus $N=1$ and $m=0$. In this case, $\varphi=1$ and $\psi(z)=z-b$ for some $b$ satisfying (\ref{6.12}) or (\ref{6.13}). However, the degree of the left-hand side of (\ref{6.6}) is $1$ while the degree of the right-hand side is at least $2$, contradiction.

\quad(h). If $N<\mu+m$, the total curvature is $-4\pi(\mu+m)$. This implies $m=N=0$. A similar argument as in (c) shows that equation (\ref{6.6}) will never hold, thus it is impossible. 

\quad(i). If $N=\mu+m$ and $N+m\geq\nu+2$, the total curvature is $-4\pi N$. In this case, we get $N=1$ and $m=0$, but $N+m=1$ contradicts to the condition $N+m\geq\nu+2$.

These arguments can be concluded as the following theorem:

\begin{theorem}\label{thm6.2}
Assuming that $M\simeq\mathbb{C}\setminus\lbrace0,1\rbrace$ is a complete Ricci surface with three catenoidal ends and a reducible induced metric $(-K)g_M$, if it has finite total curvature $-4\pi$, then it must be isometric to the surface determined by the Weierstrass data 
  \begin{equation}\label{6.19}
\left\{
\begin{aligned}
&~g=\lambda(z-1)^{\mu} z^\nu \frac{1}{z-\frac{(\mu+\nu-1)a_1a_2}{\nu}},\\
&~\eta=\kappa (z-1)^{-\mu -1}z^{-\nu -1} \left(z-\frac{(\mu+\nu-1)a_1a_2}{\nu}\right)^2dz,\\
\end{aligned}
\right.
\end{equation}
where $a_1,a_2\in\mathbb{C}\setminus\lbrace0,1\rbrace$, $\lambda,\kappa\in\mathbb{C}\setminus\lbrace0\rbrace$, $\mu,\nu\in\mathbb{R}^*_+$ satisfying $0<\mu+\nu<1$, (\ref{6.18}) and (\ref{6.12}) (or (\ref{6.13})).
\end{theorem}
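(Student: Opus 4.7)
The plan is to combine Theorem~\ref{thm6.1} with the Gauss--Bonnet formula with conical singularities (\ref{4.22}) applied to the induced constant curvature metric $(-K)g_M$. First I would set $N=\deg\psi$ and $m=\deg\varphi$ and write $(-K)g_M$ explicitly as a conformal factor times $|dz|^2$ in each of the three possibilities for the Weierstrass data separated by the earlier discussion: Case~1 with $\nu>0$, Case~1 with $\nu<0$, and Case~2 with $\nu\in\N^*$. In each instance the exponents at $0$, $1$, $a_1$, $a_2$ and at the zeroes of $\psi$ can be read off directly from (\ref{6.1}) or (\ref{6.2}), while the order at $\infty$ requires the coordinate change $w=1/z$ and a comparison of $N$ against $\mu+\nu+m$ (respectively $\mu+m$ in Case~2). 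The resulting orders $\beta_i$ are then assembled into the total curvature via (\ref{4.22}).

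The next step is to set the total curvature equal to $-4\pi$ and extract sharp integer constraints on $N$ and $m$. Each of the three main cases splits into three sub-cases according to whether the numerator dominates, is dominated by, or matches the denominator at $\infty$, giving nine configurations in total. In most of them the equation $-4\pi N=-4\pi$ (or its analogue) forces $N=m=0$ or $N=1,\,m=0$, after which the differential equations (\ref{6.5})/(\ref{6.6}) either cannot hold on degree grounds (the two sides having mismatched degrees) or are inconsistent with $\mu,\nu\in\R^*$. I expect this to eliminate eight of the nine possibilities.

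The single surviving configuration is Case~1 with $\nu>0$, $N=1$ and $m=0$. Here $\varphi\equiv 1$ and $\psi(z)=z-b$ for some $b\in\C\setminus\{0,1\}$; substituting into (\ref{6.5}) and comparing coefficients of $z^2$, $z^1$ and $z^0$ pins down $b=\frac{(\mu+\nu-1)a_1a_2}{\nu}$ and yields the scalar identity (\ref{6.18}). The residue condition on $b$ required by Theorem~\ref{thm6.1}(iii) or (iv), according as $a_1\neq a_2$ or $a_1=a_2$, is exactly (\ref{6.12}) or (\ref{6.13}). Re-inserting these data into (\ref{6.1}) produces the Weierstrass data (\ref{6.19}); the bound $0<\mu+\nu<1$ then emerges from demanding that the conical order at $\infty$, computed in the $w=1/z$ coordinate, exceeds $-1$ so that Gauss--Bonnet in the form (\ref{4.22}) is actually applicable.

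The main obstacle I foresee is bookkeeping. Each of the nine sub-cases needs a parallel argument: compute the exponent at $\infty$ via $w=1/z$, apply Gauss--Bonnet, set the total curvature to $-4\pi$, and then check degree compatibility with (\ref{6.5}) or (\ref{6.6}). Particular care is required in the borderline sub-cases $N=\mu+\nu+m$ (respectively $N=\mu+m$), where cancellation of leading terms can drop the effective order at $\infty$ below what the individual degrees suggest; one must then invoke the auxiliary condition $N+m\geq 2$ (respectively $N+m\geq \nu+2$) to recover the correct count. Once this case analysis is carried out uniformly, the theorem follows by assembling the surviving data.
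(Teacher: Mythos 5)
Your proposal matches the paper's proof essentially step for step: the same three families of Weierstrass data split into nine sub-cases by comparing $N=\deg\psi$ with $\mu+\nu+m$ (or $\mu+m$), total curvature computed via the Gauss--Bonnet formula (\ref{4.22}), elimination of all configurations except Case~1 with $\nu>0$, $N=1$, $m=0$, and then determination of $b=\frac{(\mu+\nu-1)a_1a_2}{\nu}$ together with (\ref{6.18}) and the residue condition (\ref{6.12}) or (\ref{6.13}). The only cosmetic difference is that you derive $0<\mu+\nu<1$ from the conical order at $\infty$ exceeding $-1$, whereas the paper reads it off the defining inequality $N>\mu+\nu+m$ of the surviving sub-case; these are the same constraint.
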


Moreover, we get instantly the following result from the previous discussion.

\begin{theorem}\label{thm6.3}
There is no complete Ricci surface $M\simeq\mathbb{C}\setminus\lbrace0,1\rbrace$ with three catenoidal ends which induces a reducible metric $(-K)g_M$ and has finite total curvature $-4\pi l$ for $0<l<1$.
\end{theorem}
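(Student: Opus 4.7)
The plan is to recycle the nine-case decomposition (a)--(i) carried out immediately before Theorem \ref{thm6.2}, now with the total curvature equal to $-4\pi l$ for $l\in(0,1)$. Write $N=\deg\psi\in\mathbb{N}$ and $m=\deg\varphi\in\mathbb{N}$ as in that discussion. In each of the cases (a), (b), (g), (i) the total curvature was shown to be of the form $-4\pi N$ with $N\in\mathbb{N}$; since $l\notin\mathbb{N}$, these four cases are automatically vacuous.

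There remain five cases to treat. In cases (c), (d), (e), (h), the total-curvature formulas $-4\pi(\mu+\nu+m)$, $-4\pi(N-\nu)$, $-4\pi(\mu+m)$ and $-4\pi(\mu+m)$ respectively, together with $l<1$ and the sign constraints on $\mu,\nu$, force the non-negative integer $m$ to vanish. The companion case-defining inequality (respectively $N<\mu+\nu+m$, $N>\mu+\nu+m$, $N<\mu+\nu+m$, or $N<\mu+m$) then also forces $N=0$, so that $\varphi=\psi\equiv 1$. In cases (c), (d), (e), equation (\ref{6.5}) collapses to $(\mu+\nu)z-\nu=\tfrac{\alpha}{\lambda\kappa}(z-a_1)(z-a_2)$, impossible because the left-hand side has degree at most one while the right-hand side has degree two with $\alpha\neq 0$. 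In case (h), equation (\ref{6.6}) collapses to $\mu=\tfrac{\alpha}{\lambda\kappa}z^{\nu-1}(z-a_1)(z-a_2)$, which is likewise impossible since $\mu$ is a non-zero constant and the right-hand side has degree $\nu+1\geq 2$. Case (f) requires a separate argument: the identity $N-\nu=l$ with $N\in\mathbb{N}$ and $\nu<0$ forces $N=0$ and $\nu=-l\in(-1,0)$; then $N=\mu+\nu+m$ gives $m=l-\mu<l<1$, contradicting the auxiliary hypothesis $N+m\geq 2$.

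The argument is essentially mechanical, and the main obstacle is the careful bookkeeping required to extract from $0<l<1$ the correct integer/real constraints in the four total-curvature formulas $-4\pi N$, $-4\pi(\mu+m)$, $-4\pi(\mu+\nu+m)$, $-4\pi(N-\nu)$. The only mildly delicate point is case (f), where one must combine the real equation $N-\nu=l$ with the auxiliary integer condition $N+m\geq 2$ to exclude the surviving polynomial structures. Once the parameters are pinned down in each case, non-existence follows either from a degree mismatch in (\ref{6.5}) or (\ref{6.6}), or from the simple observation that $l$ is not an integer.
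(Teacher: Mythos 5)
Your proposal is correct and follows exactly the route the paper intends when it says the result follows ``instantly from the previous discussion'': re-running the case analysis (a)--(i) with total curvature $-4\pi l$, $0<l<1$, and killing each case either by the integrality of $N$ or by a degree mismatch in (\ref{6.5}) or (\ref{6.6}). The only slip is cosmetic: in case (d) the total-curvature identity $N-\nu=l$ with $\nu<0$ is what forces $N=0$ and $\nu=-l$, and it is then the case-defining inequality $N>\mu+\nu+m$ that forces $m=0$ (you state these two deductions in the opposite order), but all the needed facts are present and the conclusion is unaffected.
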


With the help of theorem \ref{thm6.2}, we may construct an example.

\begin{example}\label{ex6.1}
Taking $\mu=\frac{1}{4}$, $\nu=\frac{1}{2}$, then condition (\ref{6.18}) becomes 
\begin{align*}
4=2(a_1+a_2)-3a_1a_2.
\end{align*}
By a direct computation, we can verify that (\ref{6.12}) or (\ref{6.13}) is true for $b=-\frac{1}{2}a_1a_2$. Therefore, we may take $a_1=\frac{6}{5}$ and $a_2=-1$, then (\ref{6.19}) will give us a desired example of Ricci surface. 
\end{example}

For Ricci surfaces $M\simeq\mathbb{C}\setminus\lbrace0,1\rbrace$ with total curvature $-4\pi l$ for $l>1$, things will become much more complicated. However, it is always beneficial to mention the trinoid constructed by Jorge and Meeks in \citep{jorge1983topology}, which is a typical example of such Ricci surfaces whose induced metric $(-K)g_M$ is reducible.

\begin{example}\label{ex6.2}
(Trinoid) The classical Weierstrass data of a trinoid are
\begin{align*}
g=z^2,\quad \eta=\frac{dz}{(z^3-1)^2},
\end{align*}
defined on $\bar{\mathbb{C}}\setminus\lbrace 1,\xi,\xi^2 \rbrace$, where $\xi=e^{\frac{2\pi i}{3}}$. After a Mobius transformation and a rotation, we may find a new pair of Weierstrass data defined on $\mathbb{C}\setminus\lbrace0,1\rbrace$ as 
\begin{equation}\label{6.20}
\left\{
\begin{aligned}
&~g=\frac{\xi-1}{\xi+1}\cdot\frac{(w-1)(w+1)}{(w-2-\sqrt{3})(w-2+\sqrt{3})},\\
&~\eta=\frac{-1}{9(2\xi+1)}\cdot\frac{(w-2-\sqrt{3})^2(w-2+\sqrt{3})^2}{w^2(w-1)^2}dw.\\
\end{aligned}
\right.
\end{equation}
Then a little computation shows $a_1=\frac{1+\sqrt{3}i}{2}$ and $a_2=\frac{1-\sqrt{3}i}{2}$. It can be checked that conditions (\ref{6.12}) and equation (\ref{6.6}) are satisfied. Additionally, by using Gauss-Bonnet formula (\ref{4.22}) again, we know that its total curvature is $-8\pi$. 
\end{example}

\section{Higher genus Ricci surfaces}

As we have seen in Section 6, it is very difficult to give a total classification of Ricci surfaces with catenoidal ends in general. However, the existence of Ricci surfaces $M\simeq \bar{\mathbb{C}}\setminus\lbrace p_1,p_2,...,p_n \rbrace$ with $n$ catenoidal ends ($n\geq2$) is well-known, which is given by the n-noid (see \cite{weber2005classical}).
\begin{example}\label{ex7.1}
(n-noid) Suppose $M\simeq \bar{\mathbb{C}}\setminus\lbrace 1,\xi,...,\xi^{n-1} \rbrace$ with $\xi=e^{\frac{2\pi i}{n}}$, then the Weierstrass data of the n-noid are given by
\begin{align*}
g=z^{n-1},\quad \eta=\frac{dz}{(z^n-1)^2}.
\end{align*}
\end{example}
Now we are interested in discussing non-positively curved Ricci surfaces with positive genus and finitely many catenoidal ends. Our goal of this section is to prove that there exist Ricci surfaces  $M\simeq S_k\setminus\lbrace p_1,p_2,...,p_n \rbrace$ with $n$ catenoidal ends for $k,n>0$, where $S_k$ is a compact orientable surface of genus $k$. To achieve this, we will use the method introduced by Andrei Moroianu and Sergiu Moroianu as mentioned in remark \ref{rem4.2}. The first tool that we need is the following theorem proved by Gabriele Mondello and Dmitri Panov (see \cite{mondello2019spherical}, Theorem A).

\begin{theorem}\label{thm7.1}
Let $\chi\leq0$ be an even number and $v_1,...,v_n\in\mathbb{R}^*_{+}$ be such that 
\begin{align}\label{7.1}
\chi+\sum_{j=1}^{n}(v_j-1)>0,
\end{align}
then there exists a compact Riemann surface $S_k$ of genus $k=\frac{2-\chi}{2}\geq 1$, a set of distinct points $\lbrace p_1,p_2,...,p_n\rbrace\subset S_k$ and a metric $g_1$ of constant Gauss curvature 1 on $S_k$ such that $g_1$ has conical singularities of order $v_j-1$ at $p_j$.
\end{theorem}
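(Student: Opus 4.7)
The statement is due to Mondello and Panov, so my plan is to sketch the strategy of their proof: reduce the problem to a Liouville-type PDE on a compact Riemann surface and combine it with a moduli/deformation argument.

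First, I would set up the problem variationally. On a compact Riemann surface $S_k$ of genus $k=(2-\chi)/2$, fix a smooth reference metric $g_0$ in the conformal class and distinct points $p_1,\ldots,p_n$. A conformal metric $g_1=e^{2u}g_0$ with conical singularities of order $v_j-1$ at $p_j$ and constant Gauss curvature $1$ corresponds to a distributional solution of
\[
-\Delta_{g_0} u + K_{g_0} - e^{2u} = 2\pi\sum_{j=1}^{n}(v_j-1)\delta_{p_j}.
\]
By Gauss--Bonnet with conical singularities, the area of $g_1$ equals $2\pi\big(\chi+\sum_{j}(v_j-1)\big)$, so the hypothesis $\chi+\sum_{j}(v_j-1)>0$ is exactly the necessary positivity condition. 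When every $v_j\leq 1$ (subcritical angles) the associated functional is coercive and one recovers Troyanov's classical existence by direct minimisation; the genuine difficulty arises when some $v_j$ exceed $1$, because then the Moser--Trudinger-type functional is unbounded below and purely variational methods fail.

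Second, I would pursue a direct geometric construction in parallel. On the round sphere one has the explicit ``football'' metrics with two antipodal conical singularities of arbitrary equal angle, together with the triangular spherical pieces classified by their vertex angles. By welding footballs along geodesic arcs and glueing in spherical triangles, one obtains flexible local models realising any prescribed cone order together with its complement. Since the theorem only requires existence on \emph{some} genus-$k$ Riemann surface with \emph{some} distinct marked points, one has the freedom to vary both the complex structure of $S_k$ and the positions of the $p_j$. A natural strategy is then to fix a pair-of-pants decomposition of $S_k$ and to distribute the conical singularities and the curvature among the pairs of pants, assigning to each pant a spherical piece matching its neighbours across the seams.

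The main obstacle, and the core of the Mondello--Panov argument, is ensuring that the global holonomy of the resulting metric is \emph{unitary}: the developing map of a spherical metric takes values in $\bar{\mathbb{C}}$ and the monodromy around every loop must lie in $\mathrm{PSU}(2)$, not merely in the ambient $\mathrm{PSL}(2,\mathbb{C})$. This is a non-linear, highly non-variational constraint that is invisible to either the PDE or the naive gluing. Mondello and Panov overcome it by a deformation argument inside a carefully chosen moduli space of spherical metrics with prescribed cone angles, exploiting the strict inequality $\chi+\sum_{j}(v_j-1)>0$ to guarantee both non-emptiness and enough compactness of the relevant stratum so that a metric with unitary monodromy can be extracted. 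Implementing this rigorously is the bulk of their paper, and invoking the result as a black box, as is done here, seems the only reasonable route.
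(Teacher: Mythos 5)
The paper gives no proof of this statement at all: it is quoted verbatim as Theorem A of Mondello and Panov and invoked as a black box, which is exactly what you propose to do. Your sketch of their strategy is therefore not load-bearing, and deferring to the original reference, as you conclude, coincides with the paper's own treatment.
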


In order to get a Ricci metric on $M\simeq S_k\setminus\lbrace p_1,p_2,...,p_n \rbrace$, we have to construct a proper flat metric. This is possible due to a result of Andrei Moroianu and Sergiu Moroianu (see \citep{moroianu2015ricci}, Lemma 6.1). 

\begin{lemma}\label{lem7.1}
Given a compact Riemann surface $S_k$ of genus $k$, let $p_1,p_2,...,p_n\in S_k$ and $\beta:\lbrace p_1,p_2,...,p_n\rbrace\rightarrow\mathbb{R}$ be a function which satisfies 
\begin{align}\label{7.2}
\sum_{j=1}^{n}(\beta(p_j)-1)=-\chi(S_k),
\end{align} 
then there exists a flat metric $g_0$ on $S_k\setminus\lbrace p_1,p_2,...,p_n \rbrace$ compatible with the complex structure of $S_k$ such that near each $p_i$, it has the form 
\begin{align}\label{7.3}
g_0=e^{2u}|z|^{2\beta(p_i)-2}|dz|^2
\end{align}
with some smooth function $u\in C^{\infty}(S_k,\mathbb{R})$.
\end{lemma}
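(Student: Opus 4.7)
I would construct $g_0$ as a conformal change of a fixed smooth reference metric: pick any smooth Riemannian metric $g$ on $S_k$ compatible with the complex structure and seek $g_0 = e^{2f}g$ with $f$ smooth away from the $p_i$ and having prescribed logarithmic blow-up at each $p_i$. Using the conformal change formula $K_{g_0} = e^{-2f}(K_g - \Delta_g f)$ (with $\Delta_g$ the positive Laplace--Beltrami operator), flatness of $g_0$ on $S_k\setminus\{p_1,\dots,p_n\}$ is equivalent to $\Delta_g f = K_g$ there. Reading off (7.3), we need $f(z) = (\beta(p_i)-1)\log|z| + (\text{smooth})$ in any holomorphic coordinate centred at $p_i$, since then $g_0 = |z|^{2\beta(p_i)-2}e^{2u}|dz|^2$ for a locally smooth $u$ that can be arranged to be the restriction of a global smooth function.

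Since $\Delta_{\mathrm{flat}}\log|z| = 2\pi\delta_0$, the global equation that $f$ must solve on $S_k$, in the distributional sense, reads
\begin{equation*}
\Delta_g f = K_g + 2\pi\sum_{i=1}^{n}(\beta(p_i)-1)\,\delta_{p_i}.
\end{equation*}
Integrating against the area form and using the Gauss--Bonnet theorem $\int_{S_k}K_g\,dA_g = 2\pi\chi(S_k)$ together with $\int_{S_k}\Delta_g f\,dA_g = 0$, the solvability condition becomes exactly $\sum_i(\beta(p_i)-1) = -\chi(S_k)$, which is hypothesis (7.2).

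To produce $f$ explicitly, I would use the Green's function $G_{p}\in L^1(S_k)$ of $\Delta_g$ normalised by $\int G_p\,dA_g = 0$ and satisfying $\Delta_g G_p = 2\pi\delta_p - 2\pi/\mathrm{Vol}_g(S_k)$, with the well-known local expansion $G_p(x) = \log d_g(x,p) + (\text{smooth near }p)$. Setting $\phi := \sum_i(\beta(p_i)-1)G_{p_i}$ and $f := \phi + h$, the equation reduces to
\begin{equation*}
\Delta_g h = K_g - \frac{2\pi\chi(S_k)}{\mathrm{Vol}_g(S_k)},
\end{equation*}
whose right-hand side is smooth and has zero mean (again by Gauss--Bonnet). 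Standard elliptic Fredholm theory on the closed surface $S_k$ then yields a smooth solution $h$, unique up to an additive constant.

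Finally I would verify: on $S_k\setminus\{p_1,\dots,p_n\}$, $f$ is smooth and $K_{g_0} = e^{-2f}(K_g - \Delta_g f) = 0$, so $g_0 := e^{2f}g$ is flat and conformal to $g$, hence compatible with the complex structure. Near each $p_i$, using the Green's function expansion and smoothness of $G_{p_j}$ at $p_i$ for $j\neq i$, we have $f = (\beta(p_i)-1)\log|z| + (\text{smooth})$ in a local holomorphic coordinate $z$, so $g_0$ has the required form (7.3). The main technical point is really the Green's function construction and its local expansion into a logarithmic pole plus a smooth term; once that is available, the Fredholm step and the curvature computation are routine, and the delicate balance between Gauss--Bonnet on $g$ and the point-mass contributions is the precise origin of the combinatorial assumption (7.2).
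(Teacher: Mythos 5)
Your proof is correct, and it is the standard argument: the paper itself does not prove this lemma but quotes it from Moroianu--Moroianu (Lemma 6.1 of \cite{moroianu2015ricci}), whose proof is essentially the same potential-theoretic construction you give --- prescribe logarithmic singularities, reduce flatness of $e^{2f}g$ to a Poisson equation whose solvability on the closed surface is exactly the Gauss--Bonnet balance (\ref{7.2}), and solve it with Green's functions plus Fredholm theory and elliptic regularity. Two small points to tighten: state the Green's function expansion in a holomorphic coordinate, $G_{p_i}=\log|z|+(\text{smooth})$, which follows by elliptic regularity applied to $G_{p_i}-\log|z|$ (with $\log d_g(\cdot,p_i)$ the remainder need not be smooth at $p_i$), so that (\ref{7.3}) is obtained with a genuinely smooth $u$ (extendable to all of $S_k$ since only its germs at the finitely many $p_i$ matter); and fix the sign convention, since the formulas $K_{g_0}=e^{-2f}(K_g-\Delta_g f)$ and $\Delta\log|z|=2\pi\delta_0$ you use are those for $\Delta_g=\mathrm{div}\,\mathrm{grad}$, not for the positive Laplacian as stated (with the opposite convention both signs flip and the argument is unchanged).
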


Thanks to these two results, we are able to prove the existence of Ricci surfaces with arbitrary genus and arbitrary number of catenoidal ends.

\begin{theorem}\label{thm7.2}
For $k,n>0$, there exists a non-positively curved orientable Ricci surface $M$ of genus $k$ with $n$ cetenoidal ends.
\end{theorem}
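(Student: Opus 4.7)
The plan is to realize $M$ as $S_k\setminus\{p_1,\ldots,p_n\}$ equipped with $g_M := g_0^2/g_1$, where $g_1$ is the metric of constant Gauss curvature $1$ furnished by Theorem \ref{thm7.1} and $g_0$ is the flat metric furnished by Lemma \ref{lem7.1}, following the construction sketched in Remark \ref{rem4.2} and based on Lemma 2.3 of \cite{moroianu2015ricci}. The catenoidal-end analysis of Section 3, in particular (\ref{3.3}) and (\ref{3.5}), forces the conical data: at each $p_i$, $g_0$ has a cusp (meaning $\beta(p_i) = 0$ in the notation of (\ref{7.3})) and $g_1$ has a conical singularity of order $\lambda_i - 1$ for some $\lambda_i > 0$ that may be chosen freely.

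The first difficulty is that, with $\{p_1,\ldots,p_n\}$ as the only singular points, condition (\ref{7.2}) reads $-n = 2k - 2$, which has no positive solution for $k \geq 1$. I would resolve this by introducing a single auxiliary point $q$ at which both metrics carry balancing conical singularities, with orders chosen so that $g_M$ extends smoothly across $q$. Comparing the local forms $g_0 \sim |z|^{2\beta-2}|dz|^2$ and $g_1 \sim |z|^{2v-2}|dz|^2$, the conformal factor of $g_M$ has local order $|z|^{4\beta-2v-2}$, so smoothness at $q$ requires $v(q) = 2\beta(q) - 1$. Setting $\beta(q) := n + 2k - 1$ turns (\ref{7.2}) into the identity $2k - 2 = 2k - 2$, while (\ref{7.1}) reduces to $\sum_i \lambda_i + n + 2k - 2 > 0$, which holds automatically for $n,k\geq 1$. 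Applying Theorem \ref{thm7.1} with data $(v(p_i);v(q)) = (\lambda_i;\, 2n+4k-3)$ produces $(S_k, g_1)$ together with the point set, and Lemma \ref{lem7.1} with data $(\beta(p_i);\beta(q)) = (0;\, n+2k-1)$ produces $g_0$ on the same surface and point set.

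With $g_M := g_0^2/g_1$, the local expansions give: at each $p_i$, $g_M \sim |z|^{-2\lambda_i - 2}|dz|^2$, so $p_i$ is a complete end with $\sqrt{-K}g_M = g_0$ of the form (\ref{3.3}), i.e., a catenoidal end; at $q$, $g_M$ extends smoothly with $K$ vanishing to order $4(n+2k-2) \geq 4$; globally $K = -(g_1/g_0)^2 \leq 0$. The main obstacle is verifying the Ricci equation (\ref{2.1}) on all of $M$. Away from $q$, the hypothesis $V = g_0 > 0$ of Lemma 2.3 of \cite{moroianu2015ricci} is satisfied, yielding the Ricci condition on $M\setminus\{q\}$. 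At $q$ the flat metric $g_0$ vanishes, so that lemma does not apply, exactly as already noted in Remark \ref{rem6.1}. I would close the gap by observing that the three terms of (\ref{2.1}) are smooth near $q$ (the high order of vanishing of $K$ at $q$ is essential here) and the equation holds on the dense open set $M\setminus\{q\}$, so it extends by continuity; this continuity step is the sole non-routine part of the argument.
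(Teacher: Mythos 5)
Your proposal is correct and follows essentially the same route as the paper: the paper also adds one auxiliary singular point $p_{n+1}=q$ with exactly your data $\beta(q)=2k-1+n$ and cone order $v(q)-1=2(2k-2+n)$, applies Theorem \ref{thm7.1} and Lemma \ref{lem7.1}, sets $g=g_0^2/g_1$, checks via the explicit local forms (\ref{7.5})--(\ref{7.6}) that $g$ extends smoothly across $q$, and identifies the catenoidal ends through $\sqrt{-K}g_M=g_0$ of type (\ref{3.3}). Your explicit continuity argument for the Ricci equation at $q$ is a point the paper leaves implicit, but it is the same construction.
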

\begin{proof}
Firstly, let us take $\chi=2-2k\leq0$, $v_1,...,v_n\in\mathbb{R}^*_{+}$ and $v_{n+1}=2(2k-2+n)+1>0$, then it is easy to verify that 
\begin{align*}
\chi+\sum_{j=1}^{n+1}(v_j-1)=\sum_{j=1}^{n}v_j+2k+n-2>0.
\end{align*} 
Hence by theorem \ref{thm7.1}, we will have a compact Riemann surface $S_k$ of genus $k$, a set of points $\lbrace p_1,p_2,...,p_{n+1}\rbrace\subset S_k$ and a metric $g_1$ of constant Gauss curvature 1 on $S_k$ with conical singularities of order $v_j-1$ at $p_j$. 

Secondly, we define a function $\beta:\lbrace p_1,p_2,...,p_{n+1}\rbrace\rightarrow\mathbb{R}$ as $\beta(p_i)=0$ for $i=1,2,...,n$ and $\beta(p_{n+1})=2k-1+n$. This function $\beta$ satisfies the equality (\ref{7.2}), thus from lemma \ref{lem7.1}, there is a flat metric $g_0$ on $S_k\setminus\lbrace p_1,p_2,...,p_{n+1} \rbrace$ which is conformal to $g_1$ and takes the form as in (\ref{7.3}) near every punctured point.

Both of $g_1$ and $g_0$ do not vanish on $S_k\setminus\lbrace p_1,p_2,...,p_{n+1} \rbrace$. Since $g_0$ is conformal to $g_1$, there is a positive function $V$ defined on $S_k\setminus\lbrace p_1,p_2,...,p_{n+1} \rbrace$ such that $g_1=Vg_0$. It follows that $g:=V^{-1}g_0$ is a Ricci metric on $S_k\setminus\lbrace p_1,p_2,...,p_{n+1} \rbrace$ (see \citep{moroianu2015ricci}, Lemma 2.3). Near the point $p_{n+1}$, it is known that the metric $g_1$ can be written as  
\begin{align}\label{7.5}
g_1=\frac{4v_{n+1}^2|z|^{4(2k-2+n)}|dz|^2}{(1+|z|^{4(2k-2+n)+2})^2}
\end{align}
for a suitable complex coordinate $z$. From lemma \ref{lem7.1}, the flat metric $g_0$ has also a local expression around $p_{n+1}$ which is 
\begin{align}\label{7.4}
g_0=e^{2u}|z|^{2(2k-2+n)}|dz|^2
\end{align}
for some smooth function $u$. 
A direct computation shows that the Ricci metric $g$ is of the form
\begin{align}\label{7.6}
g=\frac{1}{4}v_{n+1}^{-2}e^{4u}(1+|z|^{4(2k-2+n)+2})^2|dz|^2
\end{align}
near $p_{n+1}$. Hence $g$ is actually well-defined and smooth at $p_{n+1}$, thus a Ricci metric on $S_k\setminus\lbrace p_1,p_2,...,p_n \rbrace$. In addition, near each $p_i\in\lbrace p_1,p_2,...,p_n\rbrace$, $g_0$ has the form 
\begin{align*}
g_0=e^{2u}|z|^{-2}|dz|^2.
\end{align*}
This means $p_1,p_2,...,p_n$ are catenoidal ends of this Ricci surface (see the discussion after definition \ref{def3.2}). Consequently, we have constructed an orientable Ricci surface of genus $k$ with exactly $n$ catenoidal ends.
\end{proof}

\begin{example}\label{ex7.2}
Wayne Rossman and Katsunori Sato have constructed a genus 1 catenoid cousin which is a CMC-1 immersion into the hyperbolic 3-space $\mathcal{H}^3$ (see \cite{rossman1998constant}). This is an example of theorem \ref{thm7.2} in the case $k=1$ and $n=2$.
\end{example}

\bibliographystyle{plain}
\bibliography{Ricci}

\end{document}